\newcommand{\printname}[1]
\newtheorem {Lemma} {Lemma} [section]
\newtheorem {Theorem}[Lemma]{Theorem}
\newtheorem {Proposition}[Lemma]{Proposition}
\theoremstyle{definition}
\theoremstyle{remark}
\newcommand {\cal}[1]  {{\mathcal{#1}}}
\newcommand{\nab}[1][]{\ensuremath{\mathrm{\nabla}{#1}}}
\newcommand{\be}{\begin{equation}}
\newcommand{\ee}{\end{equation}}
\newcommand*\xbar[1]{%
  \hbox{%
    \vbox{%
      \hrule height 0.5pt % The actual bar
      \kern0.5ex%         % Distance between bar and symbol
      \hbox{%
        \kern-0.1em%      % Shortening on the left side
        \ensuremath{#1}%
        \kern-0.1em%      % Shortening on the right side
      }%
    }%
  }%
}
\newcommand{\lb}[1]{\label{#1}}
\newcommand{\ti}{\tilde}
\newcommand{\bet}{\beta}
\newcommand{\al}{\alpha}
\newcommand{\we}{\wedge}
\newcommand{\sig}{\sigma}
\newcommand{\lam}{\lambda}
\newcommand{\thet}{\theta}
\newcommand{\Ref}[1]{(\ref{#1})}
\newcommand{\ri}{\mathrm{Ric}}
\newcommand{\Lap}{\Delta}
\newcommand{\wht}{\widehat}
\def\t{\tau}
\newcommand{\Ll}{\cal{L}}
\newcommand{\ol}     {\overline}
\begin{document}
%\begin{article}
%\begin{opening}
\title[Conformally-K\"ahler Ricci solitons and Ricci quasi-solitons]{Conformally-K\"ahler Ricci solitons and base metrics for warped product Ricci solitons}
\author{Gideon Maschler}
%\quad Draft \today}
% \author{}

%\date{\today}

%\address{Department of Mathematics and Computer Science, Clark University,
%Worcester, Massachusetts 01610, U.S.A.}
%\email{gmaschler@clarku.edu}

%\thanks{}

%\maketitle

%\begin{abstract}
%\end{abstract}
%\keywords{
%almost soliton, Ricci soliton, K\"ahler, conformal}
%\subjclass[2010]{Primary 53C25; Secondary 53C55, 53B35}

%\setcounter{Theorem}{0}
%\renewcommand{\theTheorem}{\Alph{Theorem}}
\renewcommand{\theequation}{\arabic{section}.\arabic{equation}}
\thispagestyle{empty}
%\section{The idea}
\setcounter{equation}{0}
\address{Department of Mathematics and Computer Science, Clark University,
Worcester, Massachusetts 01610, U.S.A.}
\email{gmaschler@clarku.edu}

\begin{abstract}
We investigate K\"ahler metrics conformal to gradient Ricci solitons, and base metrics
of warped product gradient Ricci solitons. The latter we name quasi-solitons.
A main assumption that is employed is functional dependence of the soliton potential,
with the conformal factor in the first case, and with the warping function in the second.
The main result in the first case is a partial classification in dimension $n\geq 4$.
In the second case, K\"ahler quasi-soliton metrics satisfying the above main assumption
are shown to be, under an additional genericity hypothesis, necessarily Riemannian products. Another theorem
concerns quasi-soliton metrics satisfying the above main assumption, which are also conformally
K\"ahler. With some additional assumptions it is shown that such metrics are necessarily
base metrics of Einstein warped products, that is, quasi-Einstein.
\end{abstract}
\keywords{
Ricci soliton, quasi-soliton, K\"ahler, conformal, warped product}
\subjclass[2010]{Primary 53C25; Secondary 53C55, 53B35}

\maketitle

%\sbe
\section{Introduction}
The study of the Ricci flow \cite{ham} has inspired the introduction of a metric type
generalizing the Einstein condition. A gradient Ricci soliton is a Riemannian
metric satisfying $$\text{$\ri+\nab df =\lam g$,\quad $\lam$ constant.}$$
The function $f$ is called the soliton potential. Such solitons are further referred to
as shrinking, steady or expanding, depending on the sign of $\lam$.

In this paper we consider Ricci solitons in two settings: the case where they are
conformal to K\"ahler metrics, and the case where they are warped products.
Conformal classes of Ricci solitons have been studied recently in \cite{condif,consol}.
K\"ahler metrics in such a conformal class, with nontrivial conformal factor, have been
examined in \cite{confsol,and-pol}. Warped product Ricci solitons, on the other hand, have
been studied extensively when the base of the warped product is one dimensional
(cf. \cite{many}). For example, the cigar soliton and the Bryant soliton belong
to this category.

In each case we focus on an auxiliary metric which at least partially
determines the soliton. In the first case that would be the associated K\"ahler metric
in the conformal class, and in the second case it is the induced metric on the base of
the warped product. We call the latter metric a (gradient Ricci) quasi-soliton, in analogy
with how base metrics of Einstein warped products are often called quasi-Einstein metrics.
We consider only quasi-soliton metrics which are K\"ahler, or conformally K\"ahler.

A common thread for these two cases of auxiliary metrics is the appearance of two Hessians
in their defining equation. One of these is the Hessian of the soliton potential $f$, while the
other Hessian depends on the case: it is that of the conformal factor $\t$ in the first case, and
that of the the warping function $\ell$ in the second.

%Specifically, if $g$ is and $\t^2$ is a conformal
%factor such that $\t^{-2}g$ is a gradient Ricci soliton then $g$ satisfies
%\be\lb{main1}\ri+(n-2)\t^{-1}\nab d\t+ \nab df+2\t^{-1}\,d\t\odot df=\gamma g.\end{equation}
%Here the two functions are the conformal factor $\t$ and the soliton potential $f$. There is
%also a coefficient function $\gamma$, while $n$ is the dimension of the underlying manifold and
%$\odot$ denotes the symmetric tensor product.
%Similarly, if $M=B\times_w F$ is a warped product gradient Ricci soliton, then for the {\em base}
%metric $g$, which we call a quasi-soliton, we have
%\be\lb{main2}\ri-kw^{-1}\nab dw+ \nab df=\lam g.\end{equation}
%Here the two functions are the warping function $w$ and the soliton potential $f$, while the extra
%coefficient $\lam$ is constant and $k$ is the fiber dimension.
%To the best of our knowledge, this latter setting has not appeared in the literature.

These equations are, of course, more complex than the original Ricci soliton equation, and
handling them in full generality still appears beyond reach. Our strategy is thus to consider
mainly the case where functional dependence of the above two functions holds,
in either setting. In other words, we require \be\lb{fun-ind}\text{$d\t\we df=0$\quad in the first case, and\quad
$d\ell\we df=0$\quad in the second}.\end{equation} In the latter case we call the metric  a {\em special} quasi-soliton.

An example where the first of these conditions occurs in the K\"ahler conformally-soliton case, is when
the conformal factor $\t$ is additionally a potential for a Killing vector field of the K\"ahler metric
(a Killing potential). The latter condition has been studied in \cite{confsol} and plays
a role in Theorem \ref{quas2}. It turns out that the first of Conditions \Ref{fun-ind} also implies,
generically, the existence of a Killing potential which, however, is of a more general
kind, being only functionally dependent on $\t$, rather than being $\t$ itself.
An instance of this more general setting has first been considered in \cite{and-pol}.

Another metric type that plays an important role in all our main theorems is the
SKR metric, i.e. a metric that admits a so-called special K\"ahler-Ricci potential.
This notion that was first introduced in \cite{dr-ma1, skrp} for the purpose of
classifying conformally-K\"ahler Einstein metrics.
In all our main theorems the argument yields a Ricci-Hessian equation of the form
$$\al\nab d\t+\ri=\gamma g,$$ for functions $\al$ and $\gamma$. The theory of SKR
metrics which is then applied is closely tied to such equations.

The main results in this article are Theorems \ref{classf}, \ref{quas1} and \ref{quas2}.
The content of the first of these is a partial classification of K\"ahler metrics
conformal to gradient Ricci solitons in dimension $n\geq 4$ with the first of the
conditions in \Ref{fun-ind}. Theorem \ref{quas1} presents a reducibility
result for special quasi-soliton metrics which are K\"ahler. The conclusion of this theorem,
that the metric is a Riemannian product, is analogous to a similar result for quasi-Einstein metrics
\cite{quasE}. Theorem \ref{quas2} mixes the two main themes of this paper, as it involves special
quasi-soliton metrics that are conformal to an irreducible K\"ahler metric. With some additional
assumptions, the conclusion of the theorem is that the metric must in fact be quasi-Einstein.

The structure of the paper is as follows. After some preliminaries in \S\ref{prel},
we give several forms for the conformally soliton equation in \S\ref{eqns}. We then
determine in \S\ref{Kvec}, in the context of the first metric
type considered, certain implications of the assumption that vector fields that occur
in the conformally soliton equation are classically distinguished. One such assumption which
does not occur in nontrivial cases has, nonetheless, an interesting classification, which
we give in an appendix in \S\ref{appen}. In \S\ref{SKR} we recall the salient features of
SKR metric theory. The main theorem in the conformally K\"ahler case is given in \S\ref{fun-dep},
and the two main theorems for special quasi-soliton metrics appear in \S\ref{qua}.

The author acknowledges the contribution of Andrzej Derdzinski to this work, most
significantly in \S\ref{fun-dep} and the appendix. The paper is dedicated to
Vanessa Gunter, whose insightful suggestion led to the results of
\S\ref{qua}.

\section{Preliminaries}\lb{prel}
\setcounter{equation}{0}
Let $(M,g)$ be a Riemannian manifold of dimension $n$, and
$\t:M\to {\mathbb{R}}\,$ a $\,C^\infty$ function. We write metrics
conformally related to $g$ in the form $\wht{g}=\t^{-2}g$, with $\t$ a smooth function.
We recall a few conformal change formulas. The covariant derivative is
\be\lb{cov}\wht{\nab}_wu=
\nab_wu-(d_w\log\t )u-(d_{u}\log\t ) w+\langle w,u\rangle\nab\log\t,\end{equation}
where $d_u$ denotes the directional derivative of a vector field $u$
and the angle brackets stand for $g$.
It follows that the $\wht{g}$-Hessian and $\wht{g}$-Laplacian of a $C^2$ function
$f$ are given by
\begin{equation}\lb{hess-lap}
\begin{array}{l}
\wht\nabla df\,=\,\nabla df\,+\,\t^{-1}[2\,d\t\odot df
-\,g(\nab\t,\nab f)g],\\
\wht{\Lap} f=\t^2\Lap f - (n-2)\t g(\nab\t,\nab f),
\end{array}
\end{equation}
where $d\t\odot df=(d\t\otimes df+df\otimes d\t)/2$.
Finally, the well-known formula for the Ricci tensor of $\wht{g}$ is given by
\be\lb{Ricci}
\wht{\ri}\,=\,\ri\,+\,(n-2)\,\t^{-1}\nabla d\t\,+\,
\left[\t^{-1}\Delta\t\,-\,(n-1)\,\t^{-2}|\nab\t|^2\right]g, \\
\end{equation}
with $\Delta$ denoting the Laplacian and the norm $|\cdot|$ is with
respect to $g$.

Recall that a (real) vector field $w$ on a complex manifold $(M,J)$
is holomorphic if the Lie derivative ${\cal{L}}_wJ=0$.
\begin{Proposition}\lb{hol}
Let $\wht{\nab}$ be a torsion-free connection on a complex manifold $(M,J)$.
For any vector field $w$, $${\cal{L}}_wJ=\wht{\nabla}_wJ+[J,\wht{\nab} w],$$
where the square brackets denote the commutator.
\end{Proposition}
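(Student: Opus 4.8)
The plan is to evaluate both sides of the claimed identity on an arbitrary vector field $u$ and to match them term by term. First I would use the standard description of the Lie derivative of a $(1,1)$-tensor: since $\Ll_w$ is a derivation commuting with contractions,
$$(\Ll_wJ)u=\Ll_w(Ju)-J(\Ll_wu)=[w,Ju]-J[w,u].$$
The only point needing a word of justification here is that the right-hand side is $C^\infty(M)$-linear in $u$, so that it indeed defines a tensor; this holds because the terms in which $u$ is differentiated, or multiplied by a function without being differentiated, cancel.

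Next, I would invoke torsion-freeness of $\wht\nab$, which gives $[a,b]=\wht\nab_ab-\wht\nab_ba$ for all vector fields $a,b$. Applying this to both brackets above,
$$(\Ll_wJ)u=\bigl(\wht\nab_w(Ju)-\wht\nab_{Ju}w\bigr)-J\bigl(\wht\nab_wu-\wht\nab_uw\bigr).$$

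Finally I would regroup the four terms. The combination $\wht\nab_w(Ju)-J(\wht\nab_wu)$ equals $(\wht\nab_wJ)u$ by the Leibniz rule for $\wht\nab$ acting on the tensor field $J$. The remaining two terms are $J(\wht\nab_uw)-\wht\nab_{Ju}w$, which is precisely $\bigl(J\circ\wht\nab w-\wht\nab w\circ J\bigr)u=[J,\wht\nab w]u$, where $\wht\nab w$ denotes the endomorphism field $u\mapsto\wht\nab_uw$. Summing these gives $(\Ll_wJ)u=(\wht\nab_wJ)u+[J,\wht\nab w]u$ for every $u$, which is the assertion. The computation is entirely routine; the only place where care is needed is the bookkeeping of signs, that is, confirming that it is the torsion-free identity applied to $[w,Ju]$ that supplies both $(\wht\nab_wJ)u$ and the $-\wht\nab_{Ju}w$ half of the commutator, while the identity applied to $[w,u]$ supplies the $+J\wht\nab_uw$ half.
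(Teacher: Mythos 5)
Your proof is correct and follows exactly the same route as the paper's own argument: expand $(\Ll_wJ)u=\Ll_w(Ju)-J\Ll_wu$ into Lie brackets, replace each bracket using torsion-freeness of $\wht{\nab}$, and regroup the four resulting terms into $(\wht{\nab}_wJ)u$ and $[J,\wht{\nab}w]u$. The extra remark on tensoriality is harmless but not needed once the identity is verified pointwise from the displayed formula.
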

In fact, write $({\cal{L}}_wJ)u={\cal{L}}_w(Ju)-J{\cal{L}}_wu$
and replace each Lie derivative by the Lie brackets, and each
of these by the torsion free condition for $\wht{\nab}$, giving
$\wht{\nab}_wJu-\wht{\nab}_{Ju}w-J\wht{\nab}_wu+J\wht{\nab}_uw$.
The first and third terms together give $(\wht{\nabla}_wJ)(u)$,
while the second and fourth terms give $[J,\wht{\nab} w](u)$.
\begin{Proposition}\lb{hess-J}
Let $(M,J)$ be a complex manifold with a Hermitian metric $\wht{g}$.
Given a $C^2$ function $f$ on $M$, set $w=\wht{\nab} f$. Then
$\wht{\nab} df$ is $J$-invariant if and only if $[J,\wht{\nab} w]=0$.
\end{Proposition}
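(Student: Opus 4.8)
The plan is to turn the statement into elementary pointwise linear algebra on each tangent space $T_xM$. First I would record the standard identity relating the Hessian of $f$ to the covariant derivative of its $\wht g$-gradient: since $w=\wht{\nab} f$ is characterized by $df(u)=\wht g(w,u)$, metric compatibility of the Levi-Civita connection $\wht{\nab}$ of $\wht g$ gives
\[
\wht{\nab} df\,(u,v)\;=\;\wht g(\wht{\nab}_u w,\,v)\qquad\text{for all }u,v .
\]
Hence $\wht{\nab} df$ is nothing but the $\wht g$-associated bilinear form of the bundle endomorphism $A:=\wht{\nab} w$, $u\mapsto\wht{\nab}_u w$, whose commutator with $J$ is exactly the object $[J,\wht{\nab} w]$ appearing in the statement.

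The second ingredient is that $\wht g$ is Hermitian, i.e.\ $J$ is $\wht g$-orthogonal, which is equivalent to $J$ being $\wht g$-skew-adjoint: $\wht g(Ju,v)=-\wht g(u,Jv)$. Using this together with the identity above, $J$-invariance of the Hessian translates as
\[
\wht{\nab} df\,(Ju,Jv)=\wht{\nab} df\,(u,v)\ \Longleftrightarrow\ \wht g(AJu,Jv)=\wht g(Au,v)\ \Longleftrightarrow\ -\,JAJ=A,
\]
where the last equivalence uses only the non-degeneracy of $\wht g$. Multiplying $-JAJ=A$ on the left by $J$ and invoking $J^2=-\mathrm{id}$ turns it into $AJ=JA$, i.e.\ $[J,\wht{\nab} w]=0$; and reading the whole chain backwards — here one uses $JAJ=AJ^2=-A$ — gives the converse implication.

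I do not expect a genuine obstacle. The only points that need a little care are bookkeeping: one should state explicitly that $\wht{\nab}$ denotes the Levi-Civita connection of $\wht g$ (so that the Hessian--gradient identity is available), and keep straight that $\wht{\nab} df$ is the symmetric $2$-tensor version of $A$ rather than $A$ itself. Note that the argument uses only $J^2=-\mathrm{id}$, skew-adjointness of $J$, and non-degeneracy of $\wht g$; in particular it does not require $\wht g$ to be K\"ahler. Finally, it is worth observing that combining this Proposition with Proposition \ref{hol} recovers the familiar fact that in the K\"ahler case, where $\wht{\nab}_w J=0$, the Hessian $\wht{\nab} df$ is $J$-invariant precisely when $w=\wht{\nab} f$ is a holomorphic vector field.
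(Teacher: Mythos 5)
Your proof is correct and follows essentially the same route as the paper: both reduce the statement to the identity $\wht{\nab} df(u,v)=\wht g(\wht{\nab}_u w,v)$ together with $\wht g$-skew-adjointness of $J$, the only cosmetic difference being that you phrase $J$-invariance as $b(J\cdot,J\cdot)=b(\cdot,\cdot)$ while the paper uses the equivalent form $b(J\cdot,\cdot)=-b(\cdot,J\cdot)$. Your added remarks (metric compatibility suffices; K\"ahlerness is not needed) are accurate.
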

In fact, $\wht{\nab} df (Ja,b)=\wht{g}(Ja,\wht{\nab}_bw)=
-\wht{g}(a,J\wht{\nab}_bw)=-\wht{g}(a,J(\wht{\nab}w) (b))$ while\\
$-\wht{\nab} df(a,Jb)=-\wht{g}(a,\wht{\nab}_{Jb}w)=-\wht{g}(a,(\wht{\nab}w)(Jb))$.

In the following well-known proposition $\imath_v$ denotes interior multiplication by a
vector field, while $\delta$ denotes the divergence operator.
\begin{Proposition}\lb{dY}
Let $\sig$ be a smooth function on a K\"ahler manifold such that $v=\nab\sig$
is a holomorphic gradient vector field. Then
$2\imath_v\ri =-dY$ and $2\delta\nab d\sig=dY$ for $Y=\Lap\sig$.
\end{Proposition}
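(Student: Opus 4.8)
The plan is to derive both identities from the Bochner-type commutation formula for a gradient vector field on a Kähler manifold, using holomorphicity to simplify the curvature term. First I would recall that for any smooth function $\sig$ with $v=\nab\sig$, the standard Weitzenböck/Bochner identity gives $\delta\nab d\sig = -d(\Lap\sig) - \imath_v\ri$, where the sign convention for $\Lap$ and $\delta$ matches the paper's (divergence) convention; this is the commutation of the Hessian trace with the covariant derivative and holds on any Riemannian manifold. So the content to extract is the relation $2\imath_v\ri = -dY$, after which $2\delta\nab d\sig = dY$ follows by substitution. Thus the real work is the first identity, and the Kähler plus holomorphicity hypotheses must be what forces the factor of $2$.

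The key step is to use that $v=\nab\sig$ is holomorphic together with the Kähler identity relating the Hessian of $\sig$ to its complexified second derivatives. On a Kähler manifold, if $v=\nab\sig$ is holomorphic then, by Proposition \ref{hess-J} (applied with $\wht g = g$, $\wht\nab=\nab$), the Hessian $\nab d\sig$ is $J$-invariant. A $J$-invariant symmetric $2$-tensor on a Kähler manifold can be paired with the Ricci form / complex structure in a controlled way: concretely, one uses that for a $J$-invariant Hessian the Ricci identity $\nab_a\nab_b d\sig - \nab_b\nab_a d\sig = -R(a,b,\cdot,\nab\sig)$ can be traced against $J$ to produce $\imath_v\ri$ on one side and $d(\Lap\sig)$ on the other, and the $J$-invariance makes the contraction over the holomorphic and antiholomorphic indices each contribute equally — this is the source of the coefficient $2$. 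Equivalently, and perhaps cleaner for exposition: in local complex coordinates, holomorphicity of $\nab\sig$ means $\sig_{,i\bar j}$ are the only nonvanishing second covariant derivatives in the relevant sense, $Y=\Lap\sig = 2 g^{i\bar j}\sig_{,i\bar j}$, and differentiating and commuting covariant derivatives, with the Kähler symmetry of the curvature tensor, yields $\partial_k Y = 2 g^{i\bar j}\sig_{,i\bar j k} = -2 R_{k\bar j}{}^{\bar j}{}_i \sig^{,i} \cdot(\dots)$, i.e. $dY = -2\imath_v\ri$.

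The main obstacle I anticipate is bookkeeping of sign and factor-of-two conventions: the paper uses the geometer's Laplacian-as-divergence-of-gradient and the divergence operator $\delta$, and one must be careful that $\Lap\sig$ in "$Y=\Lap\sig$" is the real Laplacian (which is twice the complex Laplacian $g^{i\bar j}\partial_i\partial_{\bar j}$ in the Kähler setting), since it is exactly this factor that makes the stated constant $2$ come out, rather than $1$ or $4$. A secondary, minor obstacle is making the holomorphicity hypothesis do its job transparently: one should invoke Propositions \ref{hol} and \ref{hess-J} to convert "$\nab\sig$ holomorphic" into "$\nab d\sig$ is $J$-invariant" and then into "$[J,\nab v]=0$", so that the curvature term $R(\cdot,\cdot)v$ appearing in the Ricci identity can be contracted using the $J$-invariance to recognize it as $\imath_v\ri$ with the correct multiplicity. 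Once the conventions are pinned down, the computation is short: write the Ricci identity for $\nab d\sig$, trace it using the Kähler metric while exploiting $J$-invariance, identify the two resulting terms as $dY$ and $\pm 2\imath_v\ri$, and then read off $2\delta\nab d\sig = dY$ from the Bochner formula.
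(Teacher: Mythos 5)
Your proposal is correct and, in substance, coincides with what the paper does: the paper offers no argument of its own but cites \cite[(5.4) and (2.9)c)]{dr-ma1}, and those two formulas are exactly your two ingredients --- the Riemannian commutation (Bochner) identity for $\delta\nab d\sig$ and the K\"ahler identity $dY=-2\imath_v\ri$ obtained by commuting covariant derivatives in complex coordinates after using holomorphicity to kill the $(2,0)$-part of $\nab d\sig$. The one thing to repair is the sign of your Bochner identity: with the paper's convention that $\delta$ is the (positive) divergence and $\Lap=\operatorname{tr}\nab d$, it reads $\delta\nab d\sig=d\Lap\sig+\imath_v\ri$, which is what makes the two stated identities mutually consistent, since then $2\delta\nab d\sig=2\,dY+2\imath_v\ri=2\,dY-dY=dY$.
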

For a proof, see \cite[(5.4) and (2.9)c)]{dr-ma1}.

\section{Various forms of the conformally-soliton equation}\lb{eqns}
\setcounter{equation}{0}
Let $g$ be a Riemannian metric and $\t$ a smooth function on a given manifold, for which
$\wht{g}=g/\t^2$ is a gradient Ricci soliton with soliton potential $f$.
The soliton equation for $\wht{g}$, together with its associated scalar equation, are
\be\lb{soli}
\begin{aligned}
\text{i)}&\ \ \text{$\wht{\ri}+\wht{\nab}df=\lambda \wht{g}$,\quad with $\lam$ constant.}\\[-4pt]
\text{ii)}&\ \ \text{$\wht{\Lap} f -\wht{g}(\wht{\nab} f,\wht{\nab} f)+2\lam f=a$,\quad for a constant $a$.}
\end{aligned}
\end{equation}
To obtain this in terms of $g$, we apply Equation \Ref{Ricci} and the second equation in \Ref{hess-lap} to Equation (\ref{soli}.i). The result is
\be\lb{main1}\ri+(n-2)\t^{-1}\nab d\t+ \nab df+2\t^{-1}\,d\t\odot df=\gamma g.\end{equation}
for \be\lb{gamma}\gamma=\t^{-2}[\lam+(n-1)|\nab\t|^2]-\t^{-1}[\Delta\t-g(\nab\t,\nab f)],\end{equation}
with $|\nab\t|^2=g(\nab\t,\nab\t)$.

We will now rewrite Equation \Ref{main1} in a different form.
Specifically, for the vector fields $v=\nab\t$ and
$w=\t ^2\nab f$, Equation \Ref{main1} is equivalent to
\be\lb{main-Lie}\mathrm{Ric}+\al\cal{L}_vg+\bet\cal{L}_wg=\gamma g,\end{equation}
with $\al=(n-2)\t^{-1}/2$, $\bet=(2\t^2)^{-1}$, and $\cal{L}$ denoting the Lie derivative.
To show this, recall that for any vector fields $a$,$b$
\be\lb{Lg}(\cal{L}_w g)(a,b)=g(\nab_a w,b)+g(a,\nab_b w),\end{equation} or
$\cal{L}_wg=[\nab w+(\nab w)^*]_\flat$,
where $*$ denotes the adjoint and $\flat$ is the isomorphism associated with lowering of an index.
Now clearly
$\cal{L}_vg=\cal{L}_{\nab\t}g=2\nab d\t$.
To compute the Lie derivative term for $w$, write $w=h\nab f$, then
\begin{multline}
\cal{L}_w g=[\nab (h\nab f)+(\nab (h\nab f))^*]_\flat
=h\nab df + dh\otimes df + h\nab df + df\otimes dh\\\nonumber
=2h\nab df+2\,dh\odot df.
\end{multline}
Setting $h=\t^2$ and dividing
by $2\t^{2}$ gives
$\nab df + 2\t^{-1}d\t\odot df=(2\t^2)^{-1}\cal{L}_{\t^2\nab f}g
=\bet\cal{L}_wg$.

Another form for Equation \Ref{main1} is obtained as follows. It is natural
to combine the two Hessian terms into one. For this, set
$$\mu=\log\t, \quad \thet =f+(n-2)\log\t,\quad\psi=2\thet-(n-2)\mu.$$
Then \Ref{main1}, \Ref{gamma} and (\ref{soli}.ii) read
\be\lb{2form}
\begin{aligned}
\mathrm{i})&\ \ \ri+\nab d\thet+d\mu\odot d\psi=\gamma g,\quad
\gamma=\lam e^{-2\mu}-\Lap\mu+g(\nab\thet,\nab\mu),\\[-4pt]
\mathrm{ii})&\ \ e^{2\mu}[\Lap f-g(\nab\thet,\nab f)]+2\lam f=a.
\end{aligned}
\end{equation}
To derive (\ref{2form}.ii) one uses the second equation in \Ref{hess-lap}, which, in terms
of $\mu$, reads $$e^{-2\mu}\wht{\Lap} f=\Lap f - (n-2)g(\nab\mu,\nab f).$$

\section{The K\"ahler condition and distinguished vector fields}\lb{Kvec}
\setcounter{equation}{0}
Let $g$ be a metric which is K\"ahler with respect to a complex structure $J$ on a manifold $M$,
and conformal to a gradient Ricci soliton. Equation \Ref{main-Lie} then holds, and the
$J$-invariance of $g$ and its Ricci curvature implies that
%its composition with $J$ yields
%$$\rho+[\al\cal{L}_vg+\bet\cal{L}_wg]J=\gamma\om,$$
%where $\om$ is the K\"ahler form and $\rho$ the Ricci
%form. If follows that the linear combination of Lie derivatives, composed with $J$, must
%be a $2$-form, hence skew-symmetric. Noting that the Lie derivative of a metric is symmetric, we see that
%a necessary and sufficient condition for this linear combination to be a $2$-form is
\be\lb{herm}\text{$\al\cal{L}_vg+\bet\cal{L}_wg$ is $J$-invariant.}\end{equation} Applying \Ref{Lg}
to the relation $\Ll_xg(J\cdot,\cdot)=-\Ll_xg(\cdot,J\cdot)$, for both $x=v$ and $x=w$, and recalling
that $J^*=-J$, we see that \Ref{herm} is equivalent to the vanishing of a commutator:
$[\al(\nab v+(\nab v)^*)+\bet(\nab w+(\nab w)^*), J]=0,$ or
\be\lb{LJ}[\al(\cal{L}_vg)^\sharp+\bet(\cal{L}_wg)^\sharp, J]=0,\end{equation}
where $\sharp$ denotes the isomorphism acting by the raising of an index.

The most obvious case where \Ref{LJ} holds is when both summands vanish separately, so that,
$w$, for example, satisfies
\be\lb{LJ1}[(\cal{L}_wg)^\sharp, J]=0.\end{equation}
We wish to study relations between these two vanishing conditions for $v$ and $w$.
We first note that \Ref{LJ1} includes as special cases the following three classical types of vector fields
(the first being, of course, a special case of the second):
\begin{itemize}
\setlength\itemsep{-1.1em}
\item a Killing vector field ($\cal{L}_wg=0$), \\
\item a conformal vector field ($(\cal{L}_wg)^\sharp=hI$, for a function $h$ and $I$ the identity),\\
%``conformal" on $J$-invariant distributions,
\item a holomorphic vector field ($[\nabla w,J]=0$ on a K\"ahler manifold).
\end{itemize}
This last type is holomorphic by Proposition \ref{hol} in the K\"ahler case, and
it is indeed a special case, as $[\nabla w,J]^*=[(\nabla w)^*, J]$ and
\Ref{LJ1} is equivalent to $[\nab w+(\nab w)^*,J]=0$.

We will see in the next theorem that the Killing case does not lead to important
K\"ahler conformally soliton metrics. However, K\"ahler metrics with a Killing field of the form
$w=\t^2\nab f$  can be classified, as we show in the appendix.
%Since $\Ll_vg(w,w')$ is symmetric, its composition with $J$ will be skew-symmetric if it is Hermitian.
%To check when this occurs, $\Ll_vg(w,w')=g(\nab_w v,w')+g(w,\nab_{w'}v)$ should be equated with
% $\Ll_vg(Jw,Jw')$. Manipulating this equality gives $g(\nab_w v+J\nab_{Jw}v,w')=-g(w,\nab_{w'}v+J\nab_{Jw'}v)=0$, i.e.
% $$\text{The operator $\nab v+J\circ\nab v\circ J$ is skew-adjoint.}$$
% This is a generalization of the case where $v$ is Killing, in which $\nab v$ is skew-adjoint.
% For another way to look at this condition for skew-symmetry, we compute:
% \begin{eqnarray*}
% (\nab v+J\circ\nab v\circ J)^*&=&-(\nab v+J\circ\nab v\circ J)\\
% (\nab v)^*+J^*\circ (\nab v)^*\circ J^*&=&-(\nab v+J\circ\nab v\circ J)\\
% (\nab v)^*+(-J)\circ (\nab v)^*\circ (-J)&=&-(\nab v+J\circ\nab v\circ J)\\
% (\nab v)^*J-J\circ (\nab v)^*&=&-\nab v\circ J+J\circ\nab v
% \end{eqnarray*}
%where in the last step we composed with $J$. Finally our criteria for skew-symmetry is
%$$(\nab v+(\nab v)^*)\circ J=J\circ (\nab v+(\nab v)^*).$$

%From now and throughout the rest of this work we assume that the conformal factor $\t$ and the soliton
%potential $f$ are nonconstant. We will not mention this further in specific statements.

To state the next result, we continue to assume $g$ is K\"ahler
and conformal to a gradient Ricci soliton $\wht{g}$, but now on a manifold of dimension $n>2$.
With notations as above for $\t$, $f$, $v$ and $w$ we have
\begin{Theorem}\lb{vec-fl} The following conclusions follow for the vector fields $v$ and $w$:\\
\begin{minipage}[t]{\linegoal}
\begin{enumerate}[leftmargin=*]
\setlength\itemsep{-1.1em}
\item If $w$ is a Killing or, more generally, a conformal vector field for $g$, then $\wht{g}$ is Einstein.\\
\item If $w$ is a holomorphic vector field and either $v$ is holomorphic as well,
or $\wht{\nab} df$ is $J$-invariant, then $\mathrm{span}_\mathbb{C}\{v\}=\mathrm{span}_\mathbb{C}\{w\}$ away from the
zero sets of $v$ and $w$.
\end{enumerate}
\end{minipage}
\end{Theorem}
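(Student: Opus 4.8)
\emph{Proof strategy.} Everything rests on one remark: since $\wht{g}=\t^{-2}g$, the vector field $w=\t^2\nab f$ is precisely the $\wht{g}$-gradient of $f$, so $\Ll_w\wht{g}=2\,\wht{\nab} df$. All three conclusions are to be read off from this together with the soliton equation (\ref{soli}.i) and the conformal change formulas $\Ref{cov}$ and $\Ref{hess-lap}$.

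For part (1), I would use that being a conformal (in particular, a Killing) vector field is a property of the conformal class alone: thus if $w$ is conformal for $g$, it is conformal for $\wht{g}$, so $\Ll_w\wht{g}=\rho\,\wht{g}$ for some function $\rho$. By the remark this reads $\wht{\nab} df=\tfrac{1}{2}\rho\,\wht{g}$, and substituting into (\ref{soli}.i) gives $\wht{\ri}=(\lam-\tfrac{1}{2}\rho)\wht{g}$. Since $n>2$, Schur's lemma forces $\lam-\tfrac{1}{2}\rho$ to be constant, so $\wht{g}$ is Einstein.

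For part (2) I would first merge the two hypotheses. If $v=\nab\t$ is holomorphic then, by Propositions \ref{hol} and \ref{hess-J} applied to the K\"ahler metric $g$, the tensor $\nab d\t$ is $J$-invariant. Rewriting $\Ref{main1}$ as $\nab df+2\t^{-1}\,d\t\odot df=\gamma g-\ri-(n-2)\t^{-1}\nab d\t$ and noting, from the first line of $\Ref{hess-lap}$, that the left-hand side equals $\wht{\nab} df+\t^{-1}g(\nab\t,\nab f)\,g$, one sees that $\wht{\nab} df$ is $J$-invariant, since every term on the right is. Hence the first alternative in (2) implies the second, and it suffices to treat the case in which $w$ is holomorphic and $\wht{\nab} df$ is $J$-invariant. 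In that case set $\xi=\nab\log\t=\t^{-1}v$. Holomorphicity of $w$ means $[J,\nab w]=0$ (Proposition \ref{hol}, using $\nab J=0$), while $J$-invariance of $\wht{\nab} df$ means $[J,\wht{\nab} w]=0$ (Proposition \ref{hess-J} for the Hermitian metric $\wht{g}$, since $w=\wht{\nab} f$). Subtracting these and inserting $\Ref{cov}$ in the form $\wht{\nab} w=\nab w-w\otimes\xi^\flat+\xi\otimes w^\flat-g(\xi,w)\,I$ gives $[J,\,{-}w\otimes\xi^\flat+\xi\otimes w^\flat]=0$; evaluating this endomorphism on an arbitrary vector $a$ and using that $J$ is skew-adjoint for $g$ produces, at each point, the identity $g(\xi,a)\,Jw+g(J\xi,a)\,w-g(w,a)\,J\xi-g(Jw,a)\,\xi=0$ for all $a$. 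Where $v$ and $w$ are both nonzero: if $\{w,Jw,\xi,J\xi\}$ were linearly independent over $\R$ one could choose $a$ with $g(\xi,a)=1$ and $g(J\xi,a)=g(w,a)=g(Jw,a)=0$, whence $Jw=0$, a contradiction. So these four vectors are dependent, and since a complex subspace has even real dimension while $w\ne0$, their span is a complex line containing both $w$ and $\xi$; therefore $\mathrm{span}_{\C}\{w\}=\mathrm{span}_{\C}\{\xi\}=\mathrm{span}_{\C}\{v\}$ there, as claimed.

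The commutator bookkeeping --- computing $[J,u\otimes\eta]$ for the two rank-one pieces and extracting the displayed pointwise identity from $\Ref{cov}$ --- is routine but error-prone, the K\"ahler (hence $J$-skew) property of $g$ being what is used. The step I expect to be the real crux, and the only point where the interplay between the two hypotheses in (2) enters, is the reduction: observing via $\Ref{main1}$ and the K\"ahler property that assuming $v$ holomorphic already forces $\wht{\nab} df$ to be $J$-invariant, so that one need handle only the single condition actually used in the collinearity argument.
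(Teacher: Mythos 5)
Your argument is correct and follows essentially the same route as the paper: part (1) via $w=\wht{\nab}f$, $\Ll_w\wht{g}=2\wht{\nab}df$ and Schur's lemma; part (2) by reducing both hypotheses to $J$-invariance of $\wht{\nab}df$, then combining Propositions \ref{hol} and \ref{hess-J} with the conformal-change formula \Ref{cov} to obtain exactly the paper's pointwise identity $\langle Jv,u\rangle w-\langle Jw,u\rangle v+\langle v,u\rangle Jw-\langle w,u\rangle Jv=0$ (your subtraction of the two commutators is the same computation as the paper's evaluation of $\wht{\nab}_wJ$). The only cosmetic differences are that you deduce $J$-invariance of $\wht{\nab}df$ from \Ref{main1} rather than from \Ref{Ricci} plus the soliton equation, and you finish with a linear-independence/dimension count where the paper substitutes $u=v$, $u=Jv$ and invokes symmetry.
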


\begin{proof}
The key to both parts is that $w=\t^2\nab f$ is also the $\wht{g}$-gradient of $f$, i.e.
$w=\wht{\nab} f$. Therefore $\Ll_wg=\Ll_{\wht{\nab}f} g=2\wht{\nab} df$. It follows that in
the case where $w$ is Killing, or, more generally, conformal, the Ricci soliton equation in \Ref{soli}
reduces, using Schur's lemma, to the Einstein equation. This proves (1).

To prove (2), note first that the combination of Propositions \ref{hol} and \ref{hess-J}
for a K\"ahler metric yields the result that the vector field $v=\nab\t$ is holomorphic
exactly when $\nab d\t$ is $J$-invariant. This in turn is equivalent, by \Ref{Ricci} and the fact that
the metric $g$ and its Ricci curvature are $J$-invariant, to $\wht{Ric}$ being $J$-invariant. Finally, the
latter condition is equivalent to $\wht{\nab}df$ being $J$-invariant, by the soliton equation in \Ref{soli}.
The combination, again, of Propositions \ref{hol} and \ref{hess-J}, but this time for a hermitian metric, yields
equivalence of the latter condition with $\cal{L}_{\wht{\nab}f}J=\wht{\nab}_{\wht{\nab}f}J$, or
\be\lb{wJ}\cal{L}_{w}J=\wht{\nab}_{w}J.\end{equation}

Now from \Ref{cov}, for any vector field $u$
\begin{eqnarray*}(\wht{\nab}_wJ)u&=&\wht{\nab}_w(Ju)-J\wht{\nab}_wu=
\nab_w(Ju)-\t^{-1}(d_w\t )Ju-\t^{-1}(d_{Ju}\t ) w+\langle w,Ju\rangle\t^{-1}v\\
&-&[J\nab_w(u)-\t^{-1}(d_w\t )Ju-\t^{-1}(d_{u}\t )Jw+\langle w,u\rangle\t^{-1}Jv]\\
&=&\t^{-1}\left(-\langle v,Ju \rangle w+
\langle w,Ju\rangle v+\langle v,u \rangle Jw-\langle w,u\rangle Jv\right)\\
&=&\t^{-1}\left(\langle Jv,u \rangle w-
\langle Jw,u\rangle v+\langle v,u \rangle Jw-\langle w,u\rangle Jv\right)\\
\end{eqnarray*}
where we used the fact that $\nab_wJ=0$, and the angle brackets denote $g$.
Combining this with \Ref{wJ} we see that as $w$ is holomorphic, the last
expression vanishes for {\em every} vector field $u$. Substituting first $u=v$ and
then $u=Jv$ shows that away from the zeros of $v$, the vector fields $w$ and $Jw$ are
in $\mathrm{span} \{v,Jv\}$. As this reasoning is symmetric for $v$ and $w$, the result follows.

%From here on I suspect the argument can be made more elegant and/or accurate than what I have.
%Just substituting $u=Jv$ shows that $w\in\mathrm{span}_{\mathbb{C}}v$ (away from
%zeros of $v$). Apart from this, consider $u=u(t)$ as a curve of vectors at each point,
%so that we have $a(t)w+b(t)v+c(t)Jw+d(t)Jv=0$, chosen so that the last two coefficients
%are nowhere zero. Isolate Jw from the equation and differentiate, and then repeat
%this process on the vector Jv. This gives the desired relation between $w$ and $v$
%provided we did not divide by a coefficient equal to zero, which we can arrange
%to avoid generically, or that none of the coefficients occuring in these processes,
%for example $b/c$, are constant. But all initial coefficients are linear in u, so their
%ratios can only be constant if the corresponding linear functionals are equal, which mean
%$w$ is a multiple of $v$ or of $Jv$. The latter is not possible as the covariant derivative
%of the vector field cannot be self-adjoint and skew-adjoint without the vector field being
%parallel, which is not a viable option for nontrivial vector fields with zeros.

\end{proof}

In known examples the manifolds on which $g$ and $\wht{g}$ live are locally total spaces
of holomorphic line bundles over manifolds admitting a K\"ahler-Einstein metric, and
there in fact $\mathrm{span}_\mathbb{R}v=\mathrm{span}_\mathbb{R}w$.

\section{SKR metrics}\lb{SKR}

We recall here some facts from \cite{dr-ma1} and \cite{confsol} on the notion of an SKR
metric, i.e. a K\"ahler metric $g$ admitting a special K\"ahler-Ricci potential $\sig$.
For the definition, recall that a smooth function $\sig$ on a K\"ahler manifold $(M,J,g)$ is called a Killing
potential if $J\nab\t$ is a Killing vector field. The definition of a special K\"ahler-Ricci potential
consists then of the requirement that $\sig$ is a Killing potential and, at each noncritical point of it, all nonzero
tangent vectors orthogonal to the complex span of $\nab\sig$ are eigenvectors of both the
Ricci tensor and the Hessian of $\sig$, considered as operators. This rather technical definition
implies that a Ricci-Hessian equation holds for $\sig$ on a suitable open set (see \cite[Remark 7.4]{dr-ma1}),
namely \be\lb{Ri-He}
\ri+\al\nab d\sig=\gamma g,
\end{equation}
for some functions $\al$, $\gamma$ which are functionally dependent on $\sig$.

We say that Equation \Ref{Ri-He} is a {\em standard} Ricci-Hessian equation if $\al\, d\al\ne 0$
whenever $d\sig\ne 0$. This condition will appear in all our main theorems. However,
even if it does not hold over the entire set where $d\sig\ne 0$, these theorems will hold,
with the same proofs, on any open subset of $\{d\sig\ne0 \}$ where $\al d\al\ne 0$. We have
\begin{Proposition}\lb{typ}
A K\"ahler metric on a manifold of dimension at least four is an SKR metric,
provided it satisfies a standard Ricci-Hessian Equation of the form \Ref{Ri-He} with
$d\al\we d\sig=d\gamma\we d\sig=0$.
\end{Proposition}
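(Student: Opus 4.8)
The plan is to verify, on the open set $\{d\sig\ne0\}$ (or, as in the remark preceding the statement, on any open subset of it on which $\al\,d\al\ne0$), the two clauses defining a special K\"ahler-Ricci potential: that $\sig$ is a Killing potential, and that at each noncritical point every nonzero vector orthogonal to $\mathcal V:=\mathrm{span}_{\mathbb C}\{\nab\sig\}$ is an eigenvector of both $\ri$ and $\nab d\sig$, considered as operators. The first clause is quick: since $g$ is K\"ahler, both $g$ and $\ri$ are $J$-invariant, and since $\al\ne0$ wherever $d\sig\ne0$, solving \Ref{Ri-He} gives $\nab d\sig=\al\inv(\gamma g-\ri)$, which is therefore $J$-invariant. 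By Propositions \ref{hol} and \ref{hess-J} the $J$-invariance of $\nab d\sig$ is equivalent to $\nab\sig$ being holomorphic; on a K\"ahler manifold a short computation with \Ref{Lg} (taking $w=J\nab\sig$) shows it is also equivalent to $J\nab\sig$ being Killing. Thus $\sig$ is a Killing potential, which in particular makes Proposition \ref{dY} applicable with $v=\nab\sig$.

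Next I would produce the eigenvectors $\nab\sig$ and $J\nab\sig$ by the single computation I would actually carry out in full. Take the divergence of \Ref{Ri-He} and use: the contracted second Bianchi identity $2\,\delta\ri=ds$; Proposition \ref{dY} in the form $2\,\delta\nab d\sig=dY$ with $Y=\Lap\sig$; the Leibniz rule $\delta(\al\,\nab d\sig)=\al\,\delta(\nab d\sig)+\imath_{\nab\al}\nab d\sig$ together with $\nab\al=\al'(\sig)\nab\sig$ (which holds because $d\al\we d\sig=0$) and $\imath_{\nab\sig}\nab d\sig=\tfrac12 d|\nab\sig|^2$; and the trace relation $s=n\gamma-\al Y$ read off from \Ref{Ri-He}. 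The $dY$-terms cancel and what survives is
\[\al'\,d|\nab\sig|^2=[(2-n)\gamma'+Y\al']\,d\sig .\]
Since the equation is standard, $\al'=d\al/d\sig\ne0$ where $d\sig\ne0$, so $d|\nab\sig|^2$ is proportional to $d\sig$; hence $|\nab\sig|^2$ is locally a function of $\sig$ there, and then $\nab d\sig$ applied to $\nab\sig$, which equals $\tfrac12\nab|\nab\sig|^2$, is proportional to $\nab\sig$. So $\nab\sig$ is an eigenvector of the operator $\nab d\sig$, and so is $J\nab\sig$ because $\nab d\sig$ commutes with $J$. Consequently $\mathcal V$ and $\mathcal V^{\perp}$ are invariant under $\nab d\sig$, under $\ri=\gamma g-\al\,\nab d\sig$, and under $J$.

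Finally I would pin down $\nab d\sig$ on $\mathcal V^{\perp}$. The Ricci form $\rho=\ri(J\cdot,\cdot)$ is closed, and the $J$-invariant two-form $\eta:=(\nab d\sig)(J\cdot,\cdot)$ is closed as well, being $-\tfrac12\,d(d\sig\circ J)$. Writing $\rho=\gamma\,\om-\al\,\eta$ and substituting $d\gamma=\gamma'd\sig$, $d\al=\al'd\sig$, the identity $d\rho=0$ collapses to $d\sig\we(\gamma'\om-\al'\eta)=0$. By the Cartan lemma (applicable since $n\ge4$, so that $3$-forms need not vanish identically), $\gamma'\om-\al'\eta=d\sig\we\theta$ on $\{d\sig\ne0\}$ for some one-form $\theta$; requiring the left side to be $J$-invariant forces $\theta\in\mathrm{span}\{d\sig,\,d\sig\circ J\}$, so $\theta$ annihilates $\mathcal V^{\perp}$ and $d\sig\we\theta$ vanishes on $\mathcal V^{\perp}$. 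Therefore $\al'\eta=\gamma'\om$ on $\mathcal V^{\perp}$, and since $\al'\ne0$ while $\mathcal V^{\perp}$ is both $J$-invariant and $\nab d\sig$-invariant, this forces $\nab d\sig=(\gamma'/\al')\,g$ on $\mathcal V^{\perp}$. Hence every nonzero vector orthogonal to $\mathcal V$ is an eigenvector of $\nab d\sig$, and via \Ref{Ri-He} of $\ri$. Together with the Killing-potential property, $\sig$ is then a special K\"ahler-Ricci potential, so $g$ is SKR.

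I expect the last step to be the main obstacle. It is the only place that uses genuinely second-order information about \Ref{Ri-He} — closedness of the Ricci form — it depends on the invariance of $\mathcal V^{\perp}$ obtained in the previous step, and it hinges on the linear-algebra point that a $J$-invariant form of the shape $d\sig\we\theta$ forces $\theta$ into $\mathrm{span}\{d\sig,\,d\sig\circ J\}$. The divergence computation, by contrast, is mechanical once one is careful with sign conventions and notices the cancellation of the $dY$-terms, and the identity $\eta=-\tfrac12\,d(d\sig\circ J)$ is a standard K\"ahler computation.
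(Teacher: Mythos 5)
Your proof is correct, and it in fact supplies an argument that the paper itself omits: Proposition \ref{typ} is only cited there (from [Mas1, Prop.\ 3.5], with proof deferred to [DeM1]), and your three steps --- $J$-invariance of $\al^{-1}(\gamma g-\ri)$ giving the Killing potential, the divergence/Bianchi computation (which reproduces exactly the paper's relations (\ref{rels}.i)--(\ref{rels}.iii) and (\ref{wedge}.a)) showing $dQ\we d\sig=0$ and hence that $\mathrm{span}_{\mathbb{C}}\{\nab\sig\}$ is Hessian-invariant, and the closedness of $\rho=\gamma\om-\al\eta$ combined with the Cartan-lemma/$J$-invariance argument to force $\nab d\sig=(\gamma'/\al')g$ on the orthogonal complement --- are exactly where the hypotheses $\al\,d\al\ne0$, $d\al\we d\sig=d\gamma\we d\sig=0$ and $n\ge4$ enter in the source's treatment. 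No gaps; this is essentially the standard route.
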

This result appears in \cite[Proposition 3.5]{confsol} with proof referenced from \cite{dr-ma1}, a proof
that has to be interpreted with the aid of \cite[Remark 3.6]{confsol}. Note also that in dimension
greater than four, if the  Ricci-Hessian equation of a K\"ahler metric satisfies $d\al\we d\sig=0$
then it automatically also satisfies $d\gamma\we d\sig=0$ (see \cite[Proposition 3.3]{confsol}).

If an SKR metric is locally irreducible, the theory of such metrics
(see \S$4$ of \cite{confsol}) implies that a pair of equations holds on
the open set where the Ricci-Hessian equation \Ref{Ri-He} holds:
\be\lb{ODEs}\begin{aligned}
&(\sig-c)^2\phi''+(\sig-c)[m-(\sig-c)\al]\phi'-m\phi=K.\\
&-(\sig-c)\phi''+[\al(\sig-c)-(m+1)]\phi'+\al\phi=\gamma
\end{aligned}
\end{equation}
Here $\phi$ is defined pointwise as the eigenvalue of the Hessian of $\sig$,
considered as an operator, corresponding to the eigendistribution
$[\mathrm{span}_\mathbb{C}\nab\sig]^\perp$, and $c$ is a constant. This
eigenvalue and $\sig$ are functionally dependent, so that the primes
represent differentiations with respect to $\sig$. Furthermore, $K$ is
a constant whose exact expression in terms of SKR data will not concern
us, while $m=\dim (M)/2$. We further have the following relations between
$\phi$, $\Lap\sig$ and $Q:=g(\nab\sig,\nab\sig)$:
\be\lb{YQ}
\Lap\sig=2m\phi+2(\t-c)\phi',\quad Q=2(\t-c)\phi.
\end{equation}

In analyzing equations such as \Ref{ODEs} we will repeatedly use in \S\ref{qua} the following elementary lemma,
taken from \cite{confsol}.
\begin{Lemma}\lb{DElem}
For a system
\begin{eqnarray}
A\phi''+B\phi'+C\phi&=&D\\
\phi'+p\phi&=&q\nonumber
\end{eqnarray}
with rational coefficients,
either
$A(p^2-p')-Bp+C=0$ holds identically,
or else the solution is given by
$\phi=(D-A(q'-pq)-Bq)/(A(p^2-p')-Bp+C)$.
\end{Lemma}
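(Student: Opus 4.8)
The plan is to eliminate the second derivative from the first equation by repeated use of the first-order relation. First I would solve the second equation for $\phi'$, writing $\phi'=q-p\phi$, and then differentiate this identity once more to obtain $\phi''=q'-p'\phi-p\phi'$. Substituting $\phi'=q-p\phi$ into the right-hand side of the latter gives
\[
\phi''=q'-pq+(p^2-p')\phi,
\]
an expression for $\phi''$ that is linear in $\phi$ with rational coefficients.

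Next I would insert both this expression for $\phi''$ and the relation $\phi'=q-p\phi$ into $A\phi''+B\phi'+C\phi=D$. Collecting the terms proportional to $\phi$ and those free of $\phi$ yields the purely algebraic (zeroth-order) identity
\[
\big[A(p^2-p')-Bp+C\big]\,\phi \;=\; D-A(q'-pq)-Bq .
\]
Here the bracketed coefficient is again a rational function, being built from the rational data $A,B,C,p,q$.

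Finally I would distinguish the two cases according to whether $A(p^2-p')-Bp+C$ is identically zero or not. In the first case there is nothing further to prove. In the second case this rational function vanishes only at finitely many points, so off its zero set one may divide, obtaining $\phi=(D-A(q'-pq)-Bq)/(A(p^2-p')-Bp+C)$, which is the asserted formula. Every step here is a single differentiation or substitution, so there is no genuine obstacle; the only point requiring a word of care is that the displayed algebraic identity pins down $\phi$ only away from the (finite) zero set of the denominator, but that caveat is precisely the content of the stated dichotomy and is harmless in the applications in \S\ref{qua}.
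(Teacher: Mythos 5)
Your elimination argument is correct: differentiating $\phi'=q-p\phi$, back-substituting to get $\phi''=q'-pq+(p^2-p')\phi$, and collecting terms in the second-order equation yields exactly the identity $[A(p^2-p')-Bp+C]\phi=D-A(q'-pq)-Bq$, from which the stated dichotomy follows. This is the same elementary computation the paper relies on (it cites the proof from [Mas1] rather than reproducing it), so nothing further is needed.
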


Metrics with a special K\"ahler-Ricci potential have been completely classified
\cite{dr-ma1, skrp}. One result that will be used below is that for an irreducible SKR metric,
the function $\phi$ is nowhere zero on the open dense set where $d\sig\ne 0$.

\section{Functional dependence}\lb{fun-dep}
\setcounter{equation}{0}
Recall Equation (\ref{2form}.i):
\be\lb{2form2}
\ri+\nab d\thet+d\mu\odot d\psi=\gamma g,\quad
\gamma=\lam e^{-2\mu}-\Lap\mu+g(\nab\thet,\nab\mu),
\end{equation}
with $\mu=\log\t$, $\thet =f+(n-2)\log\t$ and $\psi=2\thet-(n-2)\mu.$
This was one of the forms of Equation \Ref{main1} characterizing
a metric $g$ conformal to a gradient Ricci soliton. If $g$ is also
K\"ahler on a manifold $(M,J)$ of real dimension at least four,
constancy of $\theta$ implies that $g$ is in fact K\"ahler-Einstein.
This follows since, in this case, the above relation defining $\psi$
shows that the term $d\mu\odot d\psi$ is just a constant multiple
of $d\mu\otimes d\mu$, and the latter vanishes as it is the only
term in \Ref{2form2} that is not $J$-invariant.

Note that $f$ cannot be constant on a nonempty open subset of $M$ without being
constant everywhere in $M$, by a real-analyticity argument stemming
from \cite{ivey1}. Hence the same holds for $\thet$, because we see from the previous paragraph
that constancy of $\thet$ on a nonempty open set implies the same for $f$.
%In the following $f$ and $\t$ are as in \S\ref{eqns}, while $\thet$ and $\mu$ are as above.
\begin{Proposition}\lb{R-H}
Assume $g$ is K\"ahler and conformal to a gradient Ricci soliton in dimension $n\geq 4$
with $\thet$ nonconstant. If $$df\we d\t= 0$$ (equivalently, $d\mu\we d\thet=0$)
then $g$ satisfies on an open dense set a Ricci-Hessian equation of the form
\be\lb{RcHs}\al\nab d\sig+\ri=\gamma g,\end{equation}
for appropriate functionally dependent functions $\al$, $\sig$.
\end{Proposition}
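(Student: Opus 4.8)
The plan is to start from Equation \Ref{2form2} and exploit the functional-dependence hypothesis $d\mu\we d\thet=0$ to collapse the term $d\mu\odot d\psi$ into a multiple of a single Hessian. Since $\psi=2\thet-(n-2)\mu$ and $d\mu\we d\thet=0$, on the open dense set $\{d\mu\ne 0\}$ we may write $d\thet = h\, d\mu$ for a function $h$ (and where $d\mu=0$ the argument of the previous paragraphs shows $d\thet=0$ too, so the two differentials vanish together). Hence $d\psi=(2h-(n-2))\,d\mu$, so $d\mu\odot d\psi=(2h-(n-2))\,d\mu\otimes d\mu$. The first thing I would do is rewrite this $d\mu\otimes d\mu$ term as a Hessian: since $d\mu\otimes d\mu = \tfrac12\,\nab d(\mu^2) - \mu\,\nab d\mu$ does not obviously help, a cleaner route is to introduce a new function $\sig$ that is functionally dependent on $\mu$ (equivalently on $\thet$, on $f$, on $\t$) chosen so that $\nab d\sig$ absorbs both $\nab d\thet$ and the rank-one term. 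Concretely, seek $\sig=F(\mu)$ with $\nab d\sig = F'(\mu)\nab d\mu + F''(\mu)\,d\mu\otimes d\mu$, and separately express $\nab d\thet = h\,\nab d\mu + h'\,d\mu\otimes d\mu$ (using $d\thet=h\,d\mu$ and differentiating, with $h$ itself a function of $\mu$); then $\nab d\thet + d\mu\odot d\psi = h\,\nab d\mu + (h' + 2h - (n-2))\,d\mu\otimes d\mu$, and one picks $F$ so that $\al\,\nab d\sig$ matches this, i.e. $\al F' = h$ and $\al F'' = h'+2h-(n-2)$, which is a single ODE relating $\al$ and $F$ as functions of $\mu$ and is readily solvable (e.g. normalize $F=\mu$ and absorb everything into $\al=h$ after re-expressing, or choose $\al$ appropriately). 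This yields \Ref{RcHs} with $\al$, $\sig$ functionally dependent, on $\{d\mu\ne 0\}$, which is open and dense.

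The key steps in order: (i) invoke the real-analyticity remark and the $n\ge 4$ K\"ahler-Einstein dichotomy already established in the text to know that $\{d\mu\ne 0\}=\{d\thet\ne 0\}=\{df\ne 0\}$ up to measure zero, and that this set is open and dense since $\thet$ is nonconstant; (ii) on this set use $d\mu\we d\thet=0$ to write $d\thet=h\,d\mu$, verify $h$ is functionally dependent on $\mu$ (this needs a small argument: $h=d_w\thet/d_w\mu$ for any $w$ with $d_w\mu\ne0$; functional dependence of $h$ on $\mu$ should follow because $\thet$ and $\mu$ are each functions of the single ``independent'' quantity along the gradient flow — alternatively, one can simply take $\sig=\thet$ and not worry about a separate $\sig$, letting $\al$ carry the functional dependence); (iii) substitute into \Ref{2form2}, collect the Hessian and rank-one pieces, and solve the resulting scalar ODE to define $\al$ and $\sig$; (iv) check $\gamma$ is also as claimed — but the statement only asks for a Ricci-Hessian equation of the form \Ref{RcHs}, so $\gamma$ need only be the function appearing there, namely $\gamma=\lam e^{-2\mu}-\Lap\mu+g(\nab\thet,\nab\mu)$ possibly after moving the absorbed rank-one remainder.

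The simplest clean choice is probably $\sig=\thet$ itself: then $\nab d\sig=\nab d\thet$ and we need $\al\,\nab d\thet + \ri = \gamma g$ to follow from \Ref{2form2}, i.e. we need $d\mu\odot d\psi = (\al-1)\nab d\thet$. Since $d\mu\odot d\psi = (2h-(n-2))\,d\mu\otimes d\mu$ and $d\thet = h\,d\mu$ gives $\nab d\thet = h\,\nab d\mu + h'\,d\mu\otimes d\mu$, this does not immediately work unless $h$ is constant — so in general one genuinely does need a new $\sig=F(\mu)$ rather than $\sig=\thet$, and the ODE $\al F'=h$, $\al F''=h'+2h-(n-2)$ must be solved. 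Eliminating $\al$ gives $F''/F'=(h'+2h-(n-2))/h$, a first-order linear ODE for $\log F'$ as a function of $\mu$ (with $h=h(\mu)$ known), hence solvable, after which $\al=h/F'$ is determined; both $\sig=F(\mu)$ and $\al$ are then manifestly functions of $\mu$, hence functionally dependent on one another and on $\t$, $f$, $\thet$.

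I expect the main obstacle to be step (ii): rigorously justifying that the ratio $h$ with $d\thet = h\,d\mu$ is genuinely a $C^\infty$ function that is functionally dependent on $\mu$ (not merely defined pointwise), and handling the locus where $\nab\mu$ vanishes. The text already supplies the needed ingredient — constancy of $\thet$ on an open set forces $f$, hence $\thet$, constant everywhere — so on the complement, which is open and dense, $d\mu$ is nonvanishing on a dense subset and $h=d\thet/d\mu$ is smooth there; functional dependence of $h$ on $\mu$ should then be extracted either from the one-dimensionality of the common level-set foliation of $\mu$ and $\thet$ or, more robustly, deferred: one can state the conclusion with $\al$, $\sig$ ``functionally dependent'' meaning functionally dependent \emph{on each other}, which is automatic once both are built as functions of $\mu$ via the ODE above. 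The rest — substituting, matching tensor types, solving the scalar ODE — is routine bookkeeping of the kind the paper performs repeatedly.
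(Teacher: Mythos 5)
Your core construction is the paper's own: on the set where the two functions are jointly functions of one parameter, you absorb the rank-one term $d\mu\odot d\psi$ into $\al\,\nab d\sig$ by choosing a reparametrization $\sig=F$ subject to a first-order linear ODE for $\log F'$; your pair $\al F'=h$, $\al F''=h'+2h-(n-2)$ is exactly the paper's condition \Ref{genvar} written with $\mu$ as the independent variable instead of a parameter $t$ along $\thet$. So the method is right and the bookkeeping is correct.

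There is, however, a genuine flaw in your step (i): you take $\mu$ as the base variable and assert that $\{d\mu\ne 0\}$ is open and dense, and that $d\mu=0$ forces $d\thet=0$. Neither follows from anything established in the text, and both can fail: the condition $df\we d\t=0$ says nothing at a point where $d\t=0$, and $\t$ (hence $\mu$) may be critical, or even locally constant, on a set where $f$ and $\thet$ are not --- the extreme case $\t\equiv\mathrm{const}$ (so $d\mu\equiv 0$, $\thet=f+\mathrm{const}$ nonconstant) satisfies all the hypotheses and your division by $d\mu$ is impossible everywhere, although the conclusion \Ref{RcHs} is then trivially true with $\sig=f$, $\al=1$. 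The real-analyticity remark preceding the proposition controls only $\thet$ and $f$, not $\mu$: it guarantees that $\{d\thet\ne0\}$ is open and dense, which is why the paper parametrizes by $\thet$ (writing $\mu$ as a function of $t=t(\thet)$ and dividing by $\dot\thet$, which is nonzero there) rather than by $\mu$. The fix is simply to invert the roles of $\mu$ and $\thet$ in your argument; with that change, and with the standard observation that $d\mu\we d\thet=0$ together with $d\thet\ne0$ makes $\mu$ locally a smooth function of $\thet$ (which disposes of the smoothness worry you defer in your last paragraph), your proof coincides with the paper's.
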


In fact, in the set where $d\theta \ne 0$, choose any
function $t$ of $\theta$ with $dt \ne 0$, so that $\theta$ and $\mu$
become functions of $t$, on some interval of the variable $t$.
% locally, at generic points we consider both $f$ and $\t$, and hence
%$\mu$ and $\thet$, as functions of some smooth function $t$,
For the moment, $t$ is not further specified. Denoting $()^{\dot{}} =\frac d{dt}$, we have
\be\lb{tvar}\nab d\thet+d\mu\odot d\psi=\dot{\thet}\nab dt+[\ddot{\thet}+
2\dot{\mu}\dot{\thet}-(n-2)\dot{\mu}^2]dt\odot dt.\end{equation}
Next, we choose a function $\sig$ of $t$ such that $\dot{\sig}>0$ and
\be\lb{genvar}\ddot{\sig}/\dot{\sig}=[\ddot{\thet}+2\dot{\mu}\dot{\thet}
-(n-2)\dot{\mu}^2]/\dot{\thet}\end{equation}
on the open dense set where $\dot{\thet}\ne 0$. The right hand side of this
equation is given, so that this stipulation amounts to the requirement that
an (easily solvable) ODE holds for $\sig$, with an essentially unique solution.

We now fix $t=\sig$. For this choice, \Ref{genvar} becomes
\be\lb{ode}\ddot{\thet}+2\dot{\mu}\dot{\thet}-(n-2)\dot{\mu}^2=0,\end{equation}
which holds on the image under $\sig$ of an open dense set, namely the
intersection of the noncritical set of $\sig$, with points where
$\dot{\thet}\ne 0$. It follows from \Ref{ode} and \Ref{tvar} that $\nab d\thet+d\mu\odot d\psi=\al\nab d\sig$, with $\al=\dot{\thet}$. This translates the first of Equations \Ref{2form2} into a Ricci-Hessian equation.

We now record some relations that will be used in the next theorem, with assumptions as in
Proposition \ref{R-H}. Let $Q=g(\nab\sig,\nab\sig)$, $Y=\Lap\sig$ and $s$ the scalar curvature of $g$.
First, from \Ref{2form2},
\be\lb{gam}\gamma =\lam e^{-2\mu}-\dot{\mu}Y+(\al\dot{\mu}-\ddot{\mu})Q\end{equation}
as $\Lap\mu=\dot{\mu}Y+\ddot{\mu}Q$ and $g(\nab\thet,\nab\mu)=\al\dot{\mu}Q$.
Next, we have
\be\lb{rels}
\begin{aligned}
\mathrm{i})&\ \al Y+s=n\gamma,\ \ \ \  \mathrm{ii})\ \al\, dY+Y\dot{\al}\,d\sig+ds=nd\gamma,\\[-4pt]
%\mathrm{iii})&\ n\lambda=e^{2\mu}[s+(\al+n\dot{\mu})Y-n(\al\dot{\mu}-\ddot{\mu})Q],\\[-4pt]
\mathrm{iii})&\ \al\, dY+\dot{\al}\, dQ+ds=2\,d\gamma,\ \ \ \ \ \mathrm{iv})\ \al\, dQ-dY=2\gamma\, d\sig.
\end{aligned}
\end{equation}
These equations are obtained in succession by
taking the $g$-trace of \Ref{RcHs}; forming the $d$-image of (\ref{rels}.i);
%combining \Ref{gam} with (\ref{rels}.i);
finally, applying twice the divergence operator $2\delta$ and, separately, interior multiplication
by $\nab\sig$, i.e. $2\imath_{\nab\sig}$,
to \Ref{RcHs} and using Proposition \ref{dY} and the Bianchi relation $2\delta\ri=ds$.

Further relations are obtained by subtracting (\ref{rels}.iii) from (\ref{rels}.ii), then applying
$\ldots \we d\sig$ to (\ref{wedge}.a), $d$ to (\ref{rels}.iv) and $d$ followed by $\ldots \we d\sig$ to
\Ref{gam}, which yield
\be\lb{wedge}
\begin{aligned}
\mathrm{a})&\ Y\dot{\al}\, d\sig - \dot{\al}\, dQ = (n-2)\, d\gamma ,\\[-4pt]
\mathrm{b})&\ \dot{\al}\, d\sig \we dQ = (n-2)\, d\gamma \we d\sig ,\\[-4pt]
\mathrm{c})&\ \dot{\al}\, d\sig \we dQ = 2\, d\gamma \we d\sig ,\\[-4pt]
\mathrm{d})&\ d\gamma \we d\sig = (\al\dot{\mu} - \ddot{\mu})\, dQ \we d\sig - \dot{\mu}\, dY \we d\sig .
\end{aligned}
\end{equation}
We can now state the following partial classification theorem.
\begin{Theorem}\lb{classf}
Let $g$ be a K\"ahler metric conformal to a gradient Ricci soliton $\wht{g}$ on a manifold $M$
of dimension $n\geq 4$, so that Equations \Ref{main1} and \Ref{2form2} hold.
%Assume $\thet$ is nonconstant, so that, by Proposition \ref{R-H} a Ricci-Hessian Equation
%\Ref{RcHs} also holds. Assume further that the latter is standard.
If $df\we d\t= 0$ (equivalently, $d\mu\we d\thet=0$), then one of the following must occur:
\be\begin{aligned}
(\mathrm{i)}&\ \  \text{$g$ is a K\"ahler-Ricci soliton,}\\[-4pt]
(\mathrm{ii)}&\ \  \text{$g$ satisfies a Ricci-Hessian equation, and if it is standard, $g$ is an SKR metric,}\\[-4pt]
(\mathrm{iii)}&\ \  \text{$n=4$  and $\wht{g}$ is an Einstein metric,}\\[-4pt]
(\mathrm{iv)}&\ \  \text{$n=4$ and $\wht{g}$ is a non-Einstein steady gradient Ricci soliton ($\lambda=0$).}
\end{aligned}\end{equation}
The Ricci-Hessian equation in (ii) holds on an open dense set.
%Furthermore, (ii) holds whenever $\al$ is nonconstant while, at the same time,
%either $n>4$, or $n=4$ and $dQ\we d\sig=0$ everywhere.
\end{Theorem}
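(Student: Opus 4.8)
The plan is to start from the Ricci--Hessian equation \Ref{RcHs} furnished by Proposition \ref{R-H}, which already holds on an open dense set once $df\we d\t=0$ and $\thet$ is nonconstant; note that $\thet$ nonconstant is automatic here unless $g$ is K\"ahler--Einstein, which is a special case of (i). So the whole argument takes place on the open dense set where $d\sig\ne0$, and the goal is to show that either the Ricci--Hessian equation is ``trivial'' in a way that forces (i), (iii) or (iv), or else $\al\,d\al\ne0$ there and Proposition \ref{typ} applies (after checking $d\al\we d\sig=d\gamma\we d\sig=0$), giving (ii). First I would dispose of the degenerate possibilities for $\al=\dot\thet$: if $\al\equiv0$ on an open set then $\nab d\thet+d\mu\odot d\psi=0$ there, and feeding this back into \Ref{2form2} together with the $J$-invariance argument already used in \S\ref{fun-dep} (the $d\mu\otimes d\mu$ term is the only non-$J$-invariant piece) should force $\thet$ constant, a contradiction; so $\al\ne0$ on a dense set.

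Next I would extract the functional-dependence relations $d\al\we d\sig=0$ and $d\gamma\we d\sig=0$ from the battery of identities \Ref{rels} and \Ref{wedge}. Since $\al=\dot\thet$ and $\sig$ is a fixed function of the one-variable parameter, $\al$ is automatically a function of $\sig$, so $d\al\we d\sig=0$ is free; the real content is $d\gamma\we d\sig=0$, and in dimension $n>4$ this is automatic by the cited \cite[Proposition 3.3]{confsol} once $d\al\we d\sig=0$. So the delicate case is $n=4$, where I expect to combine (\ref{wedge}.b), (\ref{wedge}.c) and (\ref{wedge}.d): subtracting (\ref{wedge}.b) from (\ref{wedge}.c) gives $(n-4)\,d\gamma\we d\sig=0$ modulo the $\dot\al\,d\sig\we dQ$ terms cancelling, i.e. for $n>4$ we directly get $d\gamma\we d\sig=0$ and hence (ii) via Proposition \ref{typ}. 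For $n=4$ these two equations coincide and give no information, so one must argue separately: from (\ref{wedge}.a) with $n=4$, $2\,d\gamma=Y\dot\al\,d\sig-\dot\al\,dQ$, and combined with (\ref{wedge}.d) and (\ref{rels}.iv) one should be able to solve for the scalar quantities. The branch where $d\gamma\we d\sig\ne0$ somewhere is exactly where the new cases (iii) and (iv) will appear.

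The $n=4$ analysis is the main obstacle. Here the strategy is: using (\ref{rels}.iv) to write $dY=\al\,dQ-2\gamma\,d\sig$, substitute into (\ref{wedge}.d) to get $d\gamma\we d\sig=(\al\dot\mu-\ddot\mu)\,dQ\we d\sig-\dot\mu(\al\,dQ-2\gamma\,d\sig)\we d\sig=(-\ddot\mu)\,dQ\we d\sig$, so $d\gamma\we d\sig$ is a multiple of $dQ\we d\sig$. If $\ddot\mu\equiv0$ on an open set, then $\mu$ is an affine function of $\sig$; this is a strong constraint that I would push through \Ref{gam} and the definitions $\mu=\log\t$, $\thet=f+(n-2)\mu$, $\psi=2\thet-(n-2)\mu$ — with $n=4$, $\psi=2\thet-2\mu=2f$, so $d\mu\odot d\psi=2\,d\mu\odot df$ and the Hessian combination simplifies; I expect this branch to force either $f$ functionally dependent on $\sig$ in a way making $g$ a K\"ahler--Ricci soliton (case (i)), or the Einstein case (iii). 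In the complementary branch $\ddot\mu\ne0$, one has $d\gamma\we d\sig\ne0$ wherever $dQ\we d\sig\ne0$; then \Ref{YQ}-type relations are unavailable (those presuppose irreducible SKR), so instead I would use (\ref{rels}.i)--(\ref{rels}.iv) together with $n=4$ to derive a differential relation among $\al,\gamma,Y,Q$ and ultimately pin down $\lam$. Specifically, tracking the $e^{-2\mu}$ term in \Ref{gam} against (\ref{rels}.i) (which for $n=4$ reads $\al Y+s=4\gamma$) and using the Bianchi-derived (\ref{rels}.ii)--(\ref{rels}.iii), I expect the obstruction to $d\gamma\we d\sig=0$ to be measured by $\lam$, so that $d\gamma\we d\sig\ne0$ forces $\lam=0$, i.e. the steady non-Einstein case (iv), while $\lam\ne0$ returns us to (iii) or (ii). Finally, once $d\al\we d\sig=d\gamma\we d\sig=0$ is secured and the equation is standard (which is precisely the hypothesis ``if it is standard'' in (ii)), Proposition \ref{typ} upgrades $g$ to an SKR metric; and the open-dense-set proviso in the last sentence of the theorem is just the domain inherited from Proposition \ref{R-H}. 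The genuinely hard part will be the bookkeeping in the $n=4$, $\ddot\mu\ne0$ branch, making sure that the only surviving possibility with $d\gamma\we d\sig\ne0$ is a non-Einstein \emph{steady} soliton rather than some spurious configuration.
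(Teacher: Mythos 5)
Your skeleton matches the paper's: Proposition \ref{R-H} supplies the Ricci--Hessian equation, the identities \Ref{rels}--\Ref{wedge} give $d\gamma\we d\sig=0$ for free when $n>4$, and the whole difficulty is concentrated in $n=4$. Your computation $d\gamma\we d\sig=-\ddot{\mu}\,dQ\we d\sig$ (substituting (\ref{rels}.iv) into (\ref{wedge}.d)) is correct and is exactly a step the paper takes. But the $n=4$ analysis --- which you yourself flag as the hard part --- is not actually carried out, and the branching you propose for it is wrong in two places. First, the operative dichotomy is on $dQ\we d\sig$, not on $\ddot{\mu}$: you must combine your identity with (\ref{wedge}.c), which for $n=4$ reads $\dot{\al}\,d\sig\we dQ=2\,d\gamma\we d\sig$, to get $(\dot{\al}-2\ddot{\mu})\,dQ\we d\sig=0$. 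If $dQ\we d\sig\equiv0$ then (\ref{wedge}.c) already gives $d\gamma\we d\sig=0$ and you are in case (ii); if $dQ\we d\sig\ne0$ somewhere you obtain the ODE $\dot{\al}=2\ddot{\mu}$. Your branch ``$\ddot{\mu}\equiv0$'' does not force (i) or (iii) as you expect; it merely gives $d\gamma\we d\sig=0$, i.e.\ case (ii) again.

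Second, and more seriously, your proposed mechanism for separating (iii) from (iv) cannot work. The paper integrates $\dot{\al}=2\ddot{\mu}$ to get $\al=2(\dot{\mu}+p)$ for a constant $p$, whence $\dot{f}=2p$ (since $\thet=f+2\mu$ and $\al=\dot\thet$). The case split is on $p$: if $p=0$ then $f$ is constant and the soliton equation \Ref{soli} makes $\wht{g}$ Einstein, giving (iii) with no restriction on $\lam$; if $p\ne0$, the scalar equation (\ref{2form}.ii) yields the first integral (\ref{const}.ii), and a comparison of two expressions for $4\,d\gamma+(4p^2-\al^2)\,dQ$ forces $\lam=0$, giving (iv). Your claim that ``$d\gamma\we d\sig\ne0$ forces $\lam=0$'' is false as stated, because alternative (iii) is compatible with $d\gamma\we d\sig\ne0$ and $\lam\ne0$; the quantity that actually discriminates is the constant $p=\dot f/2$, and nothing in your sketch produces it. A smaller omission: you dispose only of $\al\equiv0$, but you also need the case $\al$ equal to a nonzero constant, where (\ref{wedge}.a) makes $\gamma$ constant and \Ref{RcHs} becomes the K\"ahler--Ricci soliton equation, i.e.\ case (i).
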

Note that the less expected possibility here is (iv). However, the theorem shows
it cannot occur when $M$ is compact, as it is well-known that
compact manifolds do not admit non-Einstein steady gradient Ricci solitons (see \cite{ivey}).
%It may be the case that this possibility indicates the existence of a nontrivial almost
%Ricci soliton in dimension four.
\begin{proof}
If $\theta$ is constant, we have seen $g$ is K\"ahler-Einstein, a special case of (i). Assume from
now on that $\theta$ is nonconstant. Then by Proposition \ref{R-H}, $g$ satisfies the Ricci-Hessian
equation \Ref{RcHs} on an open dense set.

When $\al$ is constant, so is $\gamma$, by (\ref{wedge}.a) and thus
\Ref{RcHs} gives (i). Next, we assume in the rest of this proof that $\al$ is nonconstant.

If $n>4$ (or, $dQ\we d\sig=0$ everywhere), then $d\gamma\we d\sig=0$, as verified by
subtracting (\ref{wedge}.c) from (\ref{wedge}.b) (or,using (\ref{wedge}.c)).
If the Ricci-Hessian equation is standard, taking to consideration that $d\al\we d\sig=0$
because $\al=\dot\theta$, Proposition \ref{typ} implies (ii).
%\Ref{RcHs} along with the assumptions on
%%$\thet$ give (ii) according to \cite[Proposition 3.5]{confsol}.\footnote{Note that in this
%reference $\t$ plays both the role of the conformal factor and the role of $\sig$
%in the Ricci-Hessian equation.}(The additional assumption
%that this equation is standard is only used here.)

So assume $n=4$ and $dQ\we d\sig\ne 0$ somewhere in $M$ (and, consequently, almost everywhere, by
an argument involving real-analyticity, valid in dimension four).
By (\ref{rels}.iv), (\ref{wedge}.c) and (\ref{wedge}.d),
$(\dot{\al}+2\al\dot{\mu}-2\ddot{\mu})\, dQ-2\dot{\mu}dY$ and $2\dot{\mu}(dY-\al\,dQ)$ are
both functional multiples of $d\sig$. Adding these two relations, we obtain
$(\dot{\al}-2\ddot{\mu})\,dQ\we d\sig=0$, so that \Ref{ode} with $n=4$ gives
$\dot{\al}=2\ddot{\mu}$ and
\be\lb{de-stuf}
a)\ \al=2(\dot{\mu}+p), \quad b)\ 2\dot{\al}+\al^2=4p^2, \quad c)\ 4(\al\dot{\mu}-\ddot{\mu})
=(3\al+2p)(\al-2p),
\end{equation}
for a constant $p$, where a) is obtained by integration, b) using a) and \Ref{ode} with $n=4$,
while c) follows from  a) and b) by algebraic manipulations that use again $\dot{\al}=2\ddot{\mu}$.
Also, as $\dot{\thet}=\al$,
\be\lb{const}
\mathrm{i})\ \dot{f} =2p, \quad
\mathrm{ii})\ \text{$p\,[e^{2\mu}(Y-\al Q)+2\lambda\sig]$ is a constant.}
\end{equation}
In fact, differentiating the relation $\thet = f+(n-2)\mu$ with $n=4$ and
(\ref{de-stuf}.a) give (\ref{const}.i). Thus, $f$ equals
$2p\,\sig$ plus a constant. Hence $\Lap f=2pY$, and (\ref{const}.ii) follows from (\ref{2form}.ii)
and (\ref{de-stuf}.a). If $p=0$ then $f$ is constant, and this, by
the soliton equation (first equation in \Ref{soli}), implies (iii).

Suppose, finally, that $p\ne 0$ while $n=4$ and $dQ\we d\sig\ne 0$ somewhere. As a consequence of
(\ref{wedge}.a) and (\ref{de-stuf}.b)
\be\lb{dgam}
4\,d\gamma =(4p^2-\al^2)(Y\,d\sig-dQ).
\end{equation}
On the other hand, \Ref{gam}, (\ref{de-stuf}.a) and (\ref{de-stuf}.c) give
\be\lb{4gam}
4\gamma=4\lam e^{-2\mu}+(\al-2p)[(\al+2p)Q+2(\al Q-Y)].
\end{equation}
Since $p\ne 0$, (\ref{const}.ii) yields $\al Q-Y=e^{-2\mu}(2\lambda\sig-b)$ for some
constant $b$, so that \Ref{4gam} and \Ref{dgam} become
\be\lb{gdg}\begin{aligned}
\mathrm{a})&\ \ 4\gamma =e^{-2\mu}[4\lam+(2\lam\sig-b)(2\al-4p)]+(\al^2-4p^2)Q.\\[-4pt]
\mathrm{b})&\ \ 4d\gamma=(4p^2-\al^2)[\al Q\,d\sig-e^{-2\mu}(2\lam\sig-b)\,d\sig-dQ].
\end{aligned}
\end{equation}
Thus $(4p^2-\al^2)[\al Q\,d\sig-e^{-2\mu}(2\lam\sig-b)\,d\sig]$ equals the sum of
$Q\,d(\al^2-4p^2)$ and $d[e^{-2\mu}\left(4\lam+(2\lam\sig-b)(2\al-4p)\right)]$,
since both expressions coincide with $4d\gamma+(4p^2-\al^2)\,dQ$, which for the
former is clear from (\ref{gdg}.b), and for the
latter follows if one applies $d$ to (\ref{gdg}.a). This equation yields
$4e^{-2\mu}(2\lam\sig-b)(2p-\al)\al=0$, as seen by evaluating these expressions
via the first two parts of \Ref{de-stuf}, and subtracting the former expression
from the latter. As we are assuming $\al$ is not constant, it follows necessarily
that $\lam$ (and $b$) must be zero. This gives (iv), completing the proof.
\end{proof}

\section{Quasi-solitons}\lb{qua}
\setcounter{equation}{0}
Many of the original examples of gradient Ricci solitons arise as
warped products over a one dimensional base (cf. \cite{many}). We consider
here the case of an arbitrary base.

Let $\overline{g}$ be a warped product (gradient Ricci) soliton metric
on a manifold $M=B\times F$, so that \be\lb{war-sol}\overline{g}=g_B+\ell^2g_F:=g+\ell^2g_F,
\quad \overline{\ri}+\overline{\nab}df=\lam\overline{g},\end{equation} where $\ell$ is the
(pullback of) a function on the base $B$ and $\lambda$ is constant.
When $\bar{g}$ is Einstein, the base metric $g=g_B$ is sometimes called
quasi-Einstein. Similarly, in our case we will call $g_B$ a {\em quasi-soliton}
metric and drop the subscript $B$ in the notation for $g_B$-dependent quantities.

\begin{Proposition}
With notations as above, the soliton equation for $\overline{g}$ (see \Ref{war-sol})
is equivalent to the system
\be\lb{quas}
\begin{aligned}
&\ri-\frac k\ell\nab d\ell+\nab df=\lam g,\quad k=\dim (F),\\[-4pt]
&\ri_F=\nu g_F,\ \ \mathrm{where}\ \nu\ \mathrm{is\ given\ by}\\[-4pt]
&\nu+\ell\,d_{\nab f}\ell-\ell^2\ell^\#=\lam \ell^2,\
\mathrm{for}\  \ell^\#=\ell^{-1}\Delta\ell+(k-1)\ell^{-2}|\nab\ell|^2.
\end{aligned}
\end{equation}
\end{Proposition}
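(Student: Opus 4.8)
The plan is to compute the Ricci tensor of the warped product $\overline{g}=g+\ell^2 g_F$ directly and then split the soliton equation $\overline{\ri}+\overline{\nab}df=\lam\overline{g}$ into its components along the base directions, the fibre directions, and the mixed directions. The standard warped product formulas (see, e.g., O'Neill) give, for horizontal vector fields $X,Y$ (lifted from $B$) and vertical vector fields $U,V$ (lifted from $F$):
\[
\overline{\ri}(X,Y)=\ri(X,Y)-\frac{k}{\ell}\,\nab d\ell(X,Y),\qquad
\overline{\ri}(X,U)=0,
\]
\[
\overline{\ri}(U,V)=\ri_F(U,V)-\bigl(\ell\,\Delta\ell+(k-1)|\nab\ell|^2\bigr)g_F(U,V).
\]
Since $f$ is a pullback from $B$, its $\overline{g}$-gradient is horizontal and equals its $g$-gradient, and the $\overline{g}$-Hessian of $f$ satisfies $\overline{\nab}df(X,Y)=\nab df(X,Y)$, $\overline{\nab}df(X,U)=0$, while on the fibre $\overline{\nab}df(U,V)=\frac{1}{\ell}\,d_{\nab f}\ell\cdot\ell^2 g_F(U,V)=\ell\,d_{\nab f}\ell\,g_F(U,V)$; here one uses that the only nonzero Christoffel symbols mixing horizontal and vertical directions involve $\nab\ell$. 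The mixed component of the soliton equation then reads $0=0$ and carries no information.

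Next I would read off the three remaining components. The horizontal component gives immediately the first equation of \Ref{quas}, namely $\ri-\frac{k}{\ell}\nab d\ell+\nab df=\lam g$. The vertical component, after dividing the common factor $g_F(U,V)$ and multiplying through appropriately, gives
\[
\ri_F=\Bigl(\ell\,\Delta\ell+(k-1)|\nab\ell|^2+\ell\,d_{\nab f}\ell-\lam\ell^2\Bigr)g_F;
\]
the bracketed expression is a function on $B$ only, so it must be constant on each fibre — but in fact the equation forces $\ri_F$ to be a (constant, by connectedness of $F$ plus Schur, or simply by the rigidity of the warped product ansatz) multiple $\nu g_F$ of $g_F$, which is the second line of \Ref{quas}. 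Rearranging the defining relation for $\nu$ and substituting $\ell^\#=\ell^{-1}\Delta\ell+(k-1)\ell^{-2}|\nab\ell|^2$ yields $\nu+\ell\,d_{\nab f}\ell-\ell^2\ell^\#=\lam\ell^2$, the third line. The converse direction is immediate: given the system \Ref{quas}, assembling the three components reconstitutes $\overline{\ri}+\overline{\nab}df=\lam\overline{g}$ on each type of pair of vectors, hence everywhere by bilinearity.

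The only genuinely delicate point is bookkeeping the warped-product Hessian and Ricci formulas with the correct powers of $\ell$ and the correct sign conventions, and checking that $\overline{\nab}df$ has no mixed component — this rests on $f$ and $\ell$ both being pullbacks from $B$, so that $\overline{\nab}_X U$ is vertical and $\overline{g}(\overline{\nab}_X U,\nab f)=0$. Everything else is routine substitution; there is no serious obstacle, and in particular no analytic input is needed here — this is a purely tensorial identity, the warped-product analogue of how the Einstein condition on $\overline{g}$ decomposes into the quasi-Einstein equations.
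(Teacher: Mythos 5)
Your decomposition is the same one the paper uses, and the horizontal and vertical components are handled correctly, modulo a sign slip: in your displayed formula for the Einstein factor of $\ri_F$ the terms $\ell\,d_{\nab f}\ell$ and $\lam\ell^2$ carry the wrong signs (the factor should be $\ell^2\ell^\#-\ell\,d_{\nab f}\ell+\lam\ell^2$), although the relation $\nu+\ell\,d_{\nab f}\ell-\ell^2\ell^\#=\lam\ell^2$ you then state is the correct one. The one substantive point you get backwards is the mixed component. You assume at the outset that $f$ is a pullback from $B$ and conclude that the $(x,v)$-equation ``reads $0=0$ and carries no information.'' In the paper's setup only $\ell$ is assumed to be a base function; $f$ is a priori just the soliton potential of $\ol{g}$ on $M=B\times F$. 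The paper computes
\[
\ol{\nab}df(x,v)=\ell\,d_x\ell\; g_F\bigl((\ol{\nab}f)^F,v\bigr),
\]
and since $\ol{\ri}(x,v)=0=\ol{g}(x,v)$, the mixed component of the soliton equation forces $(\ol{\nab}f)^F=0$ wherever $d\ell\ne0$; it is precisely this component that shows $f$ descends to $B$, the fact the paper advertises as ``$f$ turns out to be a function with vanishing fiber covariant derivative.'' As written, your argument silently uses a property of $f$ that has to be derived, so the stated equivalence is not fully established: either add the hypothesis that $f$ is a base function to what you are proving, or insert this one-line computation. Everything else --- the O'Neill Ricci formulas, the fiber Hessian $\ol{\nab}df(v,w)=\ell\,d_{\nab f}\ell\,g_F(v,w)$, and the reassembly of the three components for the converse --- matches the paper's proof.
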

In particular the fiber metric is Einstein if $\dim (F)>2$, and $f$ turns out to be a function
with vanishing fiber covariant derivative (see below), so that we regard it as a function on $B$.
Unlike the quasi-Einstein case, the third scalar equation in \Ref{quas} does not appear to be a
consequence of the first.
\begin{proof}
To derive the equations, we need the well-known Ricci curvature
formulas for warped products (see \cite{onl}), and additionally, similar equations for the Hessian of
$f$. For the latter we use the covariant derivative formulas for warped products, together with
the known fact that for a $C^1$ function defined on the base, the gradient of its pull-back equals the
pull-back of its base gradient.

Let, $x$, $y$ denote lifts of vector fields on $B$, and $u$, $v$ lifts of vector fields on
$F$. Then we have
\be
\begin{aligned}
&\text{$\ol{\nab}_xy$ is the lift of $\nab_xy$ on $B$,}\\[-4pt]
&\text{$\ol{\nab}_xv=\ol{\nab}_vx=d_x\log (\ell) v$,}\\[-4pt]
&\text{$[\ol{\nab}_vw]^F$ is the lift of $\nab^F_vw$ on $F$,}\\[-4pt]
&\text{$[\ol{\nab}_vw]^B=-\ol{g}(v,w)\nab\log(\ell).$}
\end{aligned}
\end{equation}
Hence,
\be\begin{aligned}
\ol{\nab} df(x,y)&=\ol{g}(\ol{\nab}_x\ol{\nab} f,y)
=\ol{g}(\ol{\nab}_x(\ol{\nab} f)^B,y)+\ol{g}(\ol{\nab}_x(\ol{\nab} f)^F,y)=\\[-4pt]
&g({\nab}_x(\ol{\nab} f)^B,y)+\ol{g}(d_x\log (\ell)(\ol{\nab} f)^F,y)=g({\nab}_x(\ol{\nab} f)^B,y),\\[-4pt]
\ol{\nab} df(x,v)&=\ol{g}(\ol{\nab}_x\ol{\nab} f,v)
=\ol{g}(\ol{\nab}_x(\ol{\nab} f)^B,v)+\ol{g}(\ol{\nab}_x(\ol{\nab} f)^F,v)=\\[-4pt]
&d_x\log (\ell)\ol{g}((\ol{\nab} f)^F,v)=\ell d_x\ell g_F((\ol{\nab}f)^F,v),\\[-4pt]
\ol{\nab} df(v,w)&=\ol{g}(\ol{\nab}_v\ol{\nab} f,w)
=\ol{g}(\ol{\nab}_v(\ol{\nab} f)^B,w)+\ol{g}(\ol{\nab}_v(\ol{\nab} f)^F,w)=\\[-4pt]
&d_{(\ol{\nab} f)^B}(\log\ell)\ol{g}(v,w)+\ol{g}(\nab^F_v(\ol{\nab} f)^F,w)
-\ol{g}(v,(\ol{\nab} f)^F)\ol{g}(\nab\log(\ell), w)=\\[-4pt]
&\ell d_{(\ol{\nab} f)^B}(\ell)g_F(v,w)+\ell^2g_F(\nab^F_v(\ol{\nab} f)^F,w).
\end{aligned}\end{equation}
We combine these with the Ricci curvature formulas
\be\begin{aligned}
&\ri (x,y)=\ri_B(x,y)-(k/\ell)\nab d\ell(x,y),\\[-4pt]
&\ri (x,v)=0,\\[-4pt]
&\ri (v,w)=\ri_F(v,w)-\ell^\#g(v,w).
\end{aligned}\end{equation}
We now notice that the soliton equation applied to $x$ and $v$ implies that
$(\ol{\nab} f)^F=0$ so that $f$ can be regarded as the pull-back of a function on $B$.
This readily gives Equations \Ref{quas}.
\end{proof}

In analogy with the previous section, we will be considering quasi-soliton metrics
for which $f$ and $\ell$ are functionally dependent, that is $$df\we d\ell=0.$$
We call such metrics {\em special quasi-soliton} metrics.

It is known that K\"ahler quasi-Einstein metrics do not exist on a compact manifold,
and in general must be certain Riemannian product metrics \cite{quasE}. Similarly we show
\begin{Theorem}\lb{quas1}
Let $g$ be a K\"ahler special quasi-soliton metric on a manifold $M$ of dimension at least four. Then
$g$ satisfies a Ricci-Hessian equation on an open set. If this equation is standard, then
$g$ is a Riemannian product there. If the dimension is greater than four, then one of the factors
in this product is a K\"ahler-Einstein manifold of codimension two.
%Let $g$ be a K\"ahler special quasi-Eintein metric with $\ell$ a Killing potential.
%If $\al=f'(\ell)-k/\ell$ satisfies $\al\,d\al\ne 0$ on points where $d\ell\ne 0$ then
%$g$ is a Riemannian product, one of whose factors is a real codimension two K\"ahler-Einstein
%manifold.
\end{Theorem}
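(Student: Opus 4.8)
The plan is to follow the template of Section~\ref{fun-dep}: first rewrite the quasi-soliton equation as a Ricci--Hessian equation, then hand it to SKR theory. \emph{Step 1.} I would work on the open set $\{d\ell\ne0\}$, which may be assumed nonempty (if $\ell$ is locally constant, the first equation of \Ref{quas} is already the gradient Ricci soliton equation and $g$ is Kähler--Ricci, a degenerate case). Since $df\we d\ell=0$, there $f$ is locally a smooth function of $\ell$; writing a dot for $d/d\ell$, the Hessian chain rule gives $\nab df=\dot f\,\nab d\ell+\ddot f\,d\ell\otimes d\ell$, so the first equation of \Ref{quas} becomes
$$\ri+\bigl(\dot f-\tfrac k\ell\bigr)\nab d\ell+\ddot f\,d\ell\otimes d\ell=\lam g.$$
If $\dot f-k/\ell$ vanished on an open set of parameter values, this would read $\ri-(k/\ell^2)\,d\ell\otimes d\ell=\lam g$ there, and, $\ri$ and $\lam g$ being $J$-invariant, $d\ell\otimes d\ell$, hence $d\ell$, would vanish --- impossible. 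So $\dot f-k/\ell\ne0$ on an open dense subset of $\{d\ell\ne0\}$, and there I would choose $\sig=\sig(\ell)$ with $\dot\sig>0$ solving the easily integrated ODE $\ddot\sig/\dot\sig=\ddot f/(\dot f-k/\ell)$, and put $\al=(\dot f-k/\ell)/\dot\sig$. The displayed equation then turns into the Ricci--Hessian equation $\al\,\nab d\sig+\ri=\lam g$ of the form \Ref{RcHs}, with $\gamma=\lam$ \emph{constant} and with $\al,\sig$ both functions of $\ell$, so that $d\al\we d\sig=d\gamma\we d\sig=0$.

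\emph{Step 2.} Assuming this Ricci--Hessian equation is standard, Proposition~\ref{typ} (applicable since $\dim M\geq4$) makes $g$ an SKR metric on the above open set; in particular $\sig$ is a Killing potential, so $\nab\sig$ is a holomorphic gradient field and the relations of Section~\ref{fun-dep} are at my disposal. I would argue by contradiction, supposing $g$ locally irreducible. Since $\gamma$ is constant, relation (\ref{wedge}.a) --- whose derivatives are taken with respect to $\sig$, with $Y=\Lap\sig$, $Q=g(\nab\sig,\nab\sig)$ --- becomes $\dot\al\,(Y\,d\sig-dQ)=0$, and standardness ($\dot\al\ne0$ where $d\sig\ne0$) yields $dQ=Y\,d\sig$ there. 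Inserting the SKR identities \Ref{YQ}, namely $Q=2(\sig-c)\phi$ and $Y=2m\phi+2(\sig-c)\phi'$ with $m=\dim(M)/2$ and primes meaning $d/d\sig$, the relation $dQ=Y\,d\sig$ reduces to $2\phi+2(\sig-c)\phi'=2m\phi+2(\sig-c)\phi'$, i.e. $(m-1)\phi=0$. As $\dim M\geq4$ forces $m\geq2$, this gives $\phi\equiv0$ on the set where $d\sig\ne0$ --- contradicting the fact, recalled at the end of Section~\ref{SKR}, that $\phi$ is nowhere zero there for an irreducible SKR metric. Hence $g$ cannot be locally irreducible, and by de~Rham it is locally a Riemannian product on this open set.

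\emph{Step 3, and the main obstacle.} For $n=\dim M>4$ I would invoke the classification of SKR metrics \cite{dr-ma1, skrp}: a reducible SKR metric is, locally, a Riemannian product of a real surface (carrying the data attached to $\mathrm{span}_\C\nab\sig$) with a Kähler--Einstein manifold of real codimension two; its Einstein constant is $\lam$, as one sees by restricting $\al\,\nab d\sig+\ri=\lam g$ to the complementary factor, on which $\nab d\sig$ has no component ($\sig$ being pulled back from the surface factor), so that $\ri=\lam g$ there. (When $n=4$ that factor is itself a surface, which is why the last assertion is stated only for $n>4$.) Steps 1 and 2 are essentially transcriptions of Proposition~\ref{R-H} and the opening of the proof of Theorem~\ref{classf}; I expect the genuinely delicate point to be Step 3, where the reducible-SKR classification must be invoked with enough precision to isolate the codimension-two Kähler--Einstein factor.
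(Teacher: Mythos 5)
Your proposal is correct, and Steps 1 and 3 are essentially the paper's own argument: the same change of variable $\sig=\sig(\ell)$ absorbing the $d\ell\otimes d\ell$ term into a single Hessian, the same appeal to Proposition \ref{typ}, and the same appeal to the classification of reducible SKR metrics for the final clause. Where you genuinely diverge is in Step 2, the exclusion of irreducibility. The paper stays inside SKR theory: it writes the full system \Ref{ODEs1}, reduces it to a first-order companion, and invokes Lemma \ref{DElem}; since $\phi$ is nowhere zero, the quantity $\al'(\sig-c)^2$ must vanish identically, so $\al$ is constant, and a further manipulation of the auxiliary equation \Ref{aux} forces $\ell$ constant, $g$ Einstein, and $\al\equiv0$, contradicting standardness. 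You instead exploit the constancy of $\gamma=\lam$ in the integrability relation (\ref{wedge}.a) to get $\dot{\al}\,(Y\,d\sig-dQ)=0$, hence $dQ=Y\,d\sig$ by standardness, which via \Ref{YQ} collapses to $(m-1)\phi=0$ and contradicts $\phi\ne0$ at once. This is legitimate: although \Ref{rels} and (\ref{wedge}.a) are stated in \S\ref{fun-dep} under the conformally-soliton hypotheses, their derivation uses only the Ricci--Hessian equation, $d\al\we d\sig=0$, the K\"ahler condition and the holomorphy of $\nab\sig$ (needed for Proposition \ref{dY}), all of which hold here; you should say this explicitly rather than merely asserting the relations are ``at your disposal.'' Your route is shorter, bypassing Lemma \ref{DElem} and the second-order ODE entirely, at the cost of importing the machinery of \S\ref{fun-dep}; the paper's route is self-contained within SKR theory and records along the way the sharper dichotomy that, absent standardness, the only remaining possibility is $\al$ constant. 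Both arguments are sound.
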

\begin{proof}
As the quasi-soliton metric is special, we have $\nab df=f'\nab d\ell+f''d\ell\otimes d\ell$, with
the prime denotes differentiation with respect to $\ell$. The first of Equations \Ref{quas} then
becomes \be\lb{quas-sp}\ri+(f'-\frac k\ell)\nab d\ell+f''d\ell\otimes d\ell=\lambda g.\end{equation}

In analogy with Proposition \ref{R-H}, we introduce
a function $\sig$ with $d\ell\we d\sig=0$ and rewrite the special quasi-Einstein Equation
\Ref{quas-sp} as
\be\lb{sigm}\ri+\ti{\al}\ell'\nab d\sig+(\ti{\al}\ell''+f''\ell'^2)d\sig\otimes d\sig=\lam g,\end{equation}
for $\ti{\al}=f'(\ell)-k/\ell$, with the convention that primes on $\ell$
represent differentiations with respect to $\sig$,
while primes on $f$ still represent differentiations with respect to $\ell$.
The restriction on the open set where an ODE analogous to \Ref{genvar} holds
is $\al:=\ti{\al}\ell'\ne 0$ (corresponding to $\dot{\theta}\ne 0$ in Proposition \ref{R-H}).
On that set, Equation \Ref{sigm} becomes a Ricci-Hessian equation of the form
$$\ri+\al\nab d\sig=\lambda g,\quad \al=\ti{\al} \ell',$$ provided we choose $\sig$ so that the
differential equation
 \be\lb{aux}\ti{\al}\ell''+f''\ell'^2=0\end{equation} also holds.

Assuming the Ricci-Hessian equation is standard, Proposition \ref{typ} now shows that $g$ is an SKR metric
on the open set described above.
%The K\"ahler condition together with the the requirement that $\ell$ be a Killing potential%
%implies that only the third term in this equation is not $J$-invariant, hence its
%coefficient must vanish: $f''=0$. We thus see that Equation \Ref{quas-sp} is a
%Ricci-Hessian equation, defined away from the critical set and the zero set of $\ell$. %This, together
%with the condition on the coefficient $\al$ of $\nab d\ell$,
%implies that $\ell$ is a special K\"ahler-Ricci potential for $g$
%(see \cite[Proposition 3.5]{confsol}), so that $g$ is, in the terminology of that paper, an SKR metric.
If $g$ is irreducible, the theory of SKR metrics gives the two equations \Ref{ODEs}, which now
take the form
\be\lb{ODEs1}\begin{aligned}
&(\sig-c)^2\phi''+(\sig-c)[m-(\sig-c)\al]\phi'-m\phi=K,\\
&-(\sig-c)\phi''+[\al(\sig-c)-(m+1)]\phi'+\al\phi=\lam,
\end{aligned}
\end{equation}
where $\phi$ is defined pointwise as the eigenvalue of the Hessian of $\sig$,
mentioned in \S\ref{SKR}.

Adding the first of Equations \Ref{ODEs1} to $(\sig-c)$ times the second replaces the latter with the
first order equation $$-(\sig-c)\phi'+[(\sig-c)\al-m]\phi=K+(\sig-c)\lam.$$
Denote the ratio of the second coefficient of this equation to the first by $p$,
the ratio of the third to the first by $q$ and the coefficients of the
first of Equations \Ref{ODEs1} by $A$, $B$, $C$, $D$. We wish to invoke Lemma \ref{DElem}.
An easy computation gives the two relations
\be\lb{quant}
\begin{aligned}
&A(p^2-p')-Bp+C= \al'(\sig-c)^2,\\
&D-A(q'-pq)-Bq=0
\end{aligned}\end{equation}
According to this lemma, the solution $\phi$ is the ratio of
the second term to the first, if the latter is nonzero. However,
as mentioned at the end of \S\ref{SKR}, the function $\phi$ is
nowhere zero on the set where $d\sig\ne 0$ when $g$ is irreducible.
Hence the only possibility is that the first term in \Ref{quant} vanishes
identically, i.e. $\al$ is constant, so that $g$ is additionally a gradient
Ricci soliton. Writing this condition explicitly we get, with primes
now denoting solely differentiations with respect to $\sig$,
$$(f\circ\ell)'-k\ell'/\ell=b$$ where $b$ is constant.
But Equation \Ref{aux} can also be written as $$(f\circ\ell)''
-k\ell''/\ell=0.$$ Differentiating the first
of these two equations and combining it with the second shows that
$\ell$ is constant, hence $g$ is Einstein. But this means $\al\equiv 0$,
contradicting that the Ricci-Hessian equation for $g$ is standard.
%But
%$\al=f'-k/\ell$ is not constant (since $f'$ {\em is} a constant in this case).
Hence $g$ must be reducible.
The structure of the Riemannian product constituting $g$ follows from SKR theory.
\end{proof}

Next we consider the problem of whether quasi-soliton metrics can be conformally K\"ahler. This is
certainly possible for quasi-Einstein metrics (see \cite{quEin,quaE1}). We have the following result, analogous in form and
in proof to the previous one, though it requires more assumptions and is computationally more difficult.
\begin{Theorem}\lb{quas2}
Let $M$ be a manifold of dimension $n=2m>4$ and $g$ an irreducible K\"ahler metric on $M$ conformal to a
special quasi-soliton $\wht{g}=g/\t^2$ having warping function $\ell$, potential $f$ and appropriate constants
$k$ and $\lambda$. Assume $\t$ is a Killing potential for $g$ and $d\ell\we d\t=0$.
%If $\al=(n-2)\t^{-1}+[f'(\ell)-k/\ell]\ell'(\t)$ satisfies $\al\, d\al\ne 0$ wherever
%$d\t\ne 0$
Then $g$ satisfies a Ricci-Hessian equation. If the latter is standard, then $\wht{g}$ is quasi-Einstein.
\end{Theorem}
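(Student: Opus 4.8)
## Proof proposal for Theorem \ref{quas2}

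The plan is to mimic the structure of the proof of Theorem \ref{quas1}, but starting from the conformally-soliton bookkeeping of \S\ref{eqns} rather than the warped-product equations directly. First I would assemble the governing equation: since $\wht{g}=g/\t^2$ is a special quasi-soliton with warping function $\ell$, the base equation (\ref{quas-sp}) holds for $\wht{g}$, i.e. $\wht{\ri}+(f'-k/\ell)\wht{\nab}d\ell+f''\,d\ell\otimes d\ell=\lambda\wht{g}$, where primes are $d/d\ell$. I would then convert this into a $g$-equation using the conformal change formulas \Ref{Ricci} and \Ref{hess-lap}, producing a single equation of the shape $\ri+(n-2)\t^{-1}\nab d\t+(f'-k/\ell)\nab d\ell+(\text{rank-one terms in }d\t,d\ell)=(\text{function})\,g$. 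Here the hypothesis $d\ell\we d\t=0$ is what makes the two Hessian terms proportional after changing variables, exactly as in Proposition \ref{R-H}: choose $t$ a function of $\t$ (equivalently of $\ell$), express $\ell,f$ as functions of $t$, and the sum $(n-2)\t^{-1}\nab d\t+(f'-k/\ell)\nab d\ell$ collapses to $\dot{(\cdot)}\,\nab dt$ plus a multiple of $dt\otimes dt$. Solving an auxiliary ODE (as in \Ref{genvar}--\Ref{aux}) for a new variable $\sigma$ kills the $dt\otimes dt$ term, yielding a Ricci-Hessian equation $\ri+\alpha\nab d\sigma=\gamma g$ with $\alpha,\gamma$ functionally dependent on $\sigma$ on an open dense set. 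That proves the first assertion.

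Next, for the ``standard'' case: because $g$ is irreducible K\"ahler of dimension $>4$ and satisfies a standard Ricci-Hessian equation with $d\alpha\we d\sigma=0$, Proposition \ref{typ} says $g$ is an SKR metric, so the SKR ODE system \Ref{ODEs} holds, together with the relations \Ref{YQ}. The extra hypothesis that $\t$ is a Killing potential for $g$ is crucial here: it means $\t$ itself is, up to affine reparametrization, one of the SKR data, so $\t$ is an affine function of $\sigma$ (this is the point where \cite{confsol} enters), and hence $\mu=\log\t$, $\ell$, $f$, $\gamma$, etc., all become explicit functions of $\sigma$, tied to $\phi$ via \Ref{YQ}. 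I would then feed these into the scalar/trace relations analogous to \Ref{rels}--\Ref{wedge} (taking the $g$-trace of the Ricci-Hessian equation, then $2\delta$ and $2\imath_{\nab\sigma}$ using Proposition \ref{dY} and the Bianchi identity) to get first-order relations among $Y=\Lap\sigma$, $Q=|\nab\sigma|^2$, $\gamma$ and $\sigma$. Combining these with the SKR relations \Ref{YQ} should, as in Theorem \ref{quas1}, reduce the second SKR ODE to a first-order equation, and then an application of Lemma \ref{DElem}—using that $\phi\ne 0$ on the noncritical set for an irreducible SKR metric—forces the relevant rational coefficient to vanish identically.

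The conclusion step is where the flavour differs from Theorem \ref{quas1}: there, the vanishing of that coefficient forced $\alpha$ constant and then $\ell$ constant, a contradiction with standardness; here I expect the vanishing condition to instead force the ``quasi-Einstein'' relation, namely that the third scalar equation in \Ref{quas} degenerates so that the fiber constant $\nu$ and $\lambda$ are compatible with $\wht{g}$ being a base metric of an Einstein warped product—equivalently, that $f$ is (affine-)constant along the relevant direction, or that the soliton potential contribution collapses into the warping data. Concretely I would show the coefficient-vanishing forces $f$ to be an affine function of $\sigma$ with the slope pinned by $k,\lambda$, and then check that \Ref{quas-sp} with this $f$ is exactly the quasi-Einstein equation for $\wht{g}$, i.e. $\wht{g}$ satisfies $\wht{\ri}-(k/\ell)\wht{\nab}d\ell=\lambda\wht{g}$ after absorbing $\nab df$ into $\ell$ (rescaling $\ell$). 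The main obstacle will be this last bookkeeping: unlike the quasi-Einstein setting, the third scalar equation of \Ref{quas} is not automatic, so I must verify by hand—using (\ref{const})-type integrated relations and the explicit $\sigma$-dependence coming from the Killing-potential hypothesis—that the degeneracy genuinely lands on ``quasi-Einstein'' and not merely on ``Einstein fiber with incompatible scalar constraint.'' I also anticipate that the computations replacing \Ref{rels}--\Ref{wedge} are heavier here because both $\mu$ and $\ell$ (and their $\sigma$-derivatives) appear, so carefully tracking which combinations are forced constant is the crux.
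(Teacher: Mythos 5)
Your skeleton (conformal change, Ricci--Hessian equation, SKR via Proposition \ref{typ}, the two ODEs, Lemma \ref{DElem} together with $\phi\ne 0$) matches the paper's, but the mechanism you propose for producing the Ricci--Hessian equation is not the one that works here, and the discrepancy costs you the endgame. You suggest absorbing the rank-one term by solving an auxiliary ODE for a new variable $\sigma$, as in Proposition \ref{R-H}. In the actual proof the Hessian variable is $\t$ itself and no reparametrization occurs: writing $d\ell=\ell'(\t)\,d\t$ yields \Ref{confK}, and the hypothesis that $\t$ is a Killing potential makes $\nab d\t$ $J$-invariant, so the coefficient of $d\t\otimes d\t$ is the \emph{only} non-$J$-invariant term and must vanish identically; this is \Ref{coef}, $\mu(\ell''+2\t^{-1}\ell')+(\ell')^2\chi=0$. (If you insist on your $\sigma$, $J$-invariance of the resulting equation forces $\sigma$ to be affine in $\t$ and you land on the same identity.) The point you miss is that \Ref{coef} is a \emph{forced} scalar ODE relating $f$ and $\ell$ to $\t$; your construction absorbs it into the choice of $\sigma$ instead of recording it, and it is precisely the piece of information needed at the end.

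The conclusion step as you sketch it would fail. The vanishing of $A(p^2-p')-Bp+C$ does not directly force a quasi-Einstein relation or pin $f$ to be affine; it gives a first-order ODE for $\al$ whose general solution is $\al=(n-2)/\t+C/(\t(\t-2c))$, and by the classifications in \cite{confsol} and \cite{quEin} this form of $\al$ is compatible with \emph{two} distinct outcomes: $\wht{g}$ a gradient Ricci soliton (SKR constant $c=0$) or $\wht{g}$ quasi-Einstein ($c\ne 0$). Proving the theorem requires eliminating the nontrivial soliton branch, and that is exactly where \Ref{coef} enters: comparing the expression for $\al$ in \Ref{confK} with the solved form gives $(f\circ\ell)'-k\ell'/\ell=C/(\t(\t-2c))$, and combining this with \Ref{coef} yields $k\ell'^2/\ell^2=-2cC/(\t^2(\t-2c)^2)$, so $c=0$ forces $\ell$ constant, hence $C=0$ and $\wht{g}$ Einstein (a degenerate quasi-Einstein metric). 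Without having recorded \Ref{coef} you have no way to make this exclusion --- the very step you flag as ``the main obstacle'' is the genuine gap in your argument.
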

\begin{proof}
Being a special quasi-soliton, $\wht{g}$ satisfies Equation \Ref{quas-sp}, i.e.
\begin{equation}\lb{QRH}
\wht{\ri}+\mu\wht{\nabla} d\ell+\chi d\ell\otimes d\ell=\lambda\wht{g},
\end{equation}
for $\mu=f'(\ell)-k/\ell$ and $\chi=f''(\ell)$.

 Using \Ref{Ricci} and the first equation in \Ref{hess-lap}, we see that $g$ satisfies
\begin{multline}\ri+(n-2)\t^{-1}\nab d\t+(\t^{-1}\Delta\t-(n-1)\t^{-2}Q)g+\\
\mu (\nab d\ell+2\t^{-1}d\t\odot d\ell-\t^{-1}g(\nab\t,\nab \ell)g)+\chi d\ell\otimes d\ell=\lambda\t^{-2}g,
\end{multline}
with $Q=g(\nab\t,\nab\t)$. Since $d\ell\we d\t=0$, writing $d\ell=\ell'(\t)\,d\t$ and rearranging terms, we
rewrite this equation as
\begin{multline}\lb{confK}
\ri+\al\nab d\t+
(\mu (\ell''+2\t^{-1}\ell')+(\ell')^2\chi )d\t\otimes d\t\\
=(\lambda\t^{-2}-\t^{-1}\!\Delta\t+(\al+\t^{-1}\!)\t^{-1}\!Q)g,\quad \mathrm{for}\ \ \al=(n-2)\t^{-1}+\mu\ell'.\\
\end{multline}

\vspace{-.1in}
As $g$ is K\"ahler and $\t$ is a Killing potential, the term with
$d\t\otimes d\t$ is the only one which is not $J$-invariant.
Hence its coefficient must vanish:
\be\lb{coef}\mu (\ell''+2\t^{-1}\ell')+(\ell')^2\chi =0.\end{equation}
As a result, Equation \Ref{confK} is Ricci-Hessian:
\be\lb{RH}
\text{$\ri+\al\nab d\t=\gamma g$,\quad  where
$\gamma=\lambda\t^{-2}-\t^{-1}\!\Delta\t+(\al+\t^{-1}\!)\t^{-1}\!Q$.}
\end{equation}
Since clearly $d\al\we d\t=0$, and $n>4$, as mentioned in \S\ref{SKR},
we also have $d\gamma\we d\t=0$. Under the assumption that the
Ricci-Hessian equation is standard, we conclude from Proposition \ref{typ}
that $(g,\t)$ is an SKR metric with $\t$ the special K\"ahler-Ricci potential.
As in the previous theorem, irreducibility of $g$ again implies that
two ODE's hold for the horizontal Hessian eigenvalue function $\phi$. They are
\be\lb{odes}
\begin{aligned}
&(\t-c)^2\phi''+(\t-c)[m-(\t-c)\al]\phi'-m\phi=K\\
&-(\t-c)\phi''+(\al(\t-c)-(m+1))\phi'+\al\phi=\gamma=\\
&\lambda\t^{-2}-\t^{-1}(2m\phi+2(\t-c)\phi')+(\al+\t^{-1})\t^{-1}2(\t-c)\phi
\end{aligned}
\end{equation}
where $K$, $c$ are constants, and we have used formulas \Ref{YQ} giving $\Lap\t$ and $Q$
in terms of $\phi$.

Simplifying the second equation, we then replace it by a first order equation as in the previous
theorem, to obtain the equivalent system
\begin{multline}\lb{odes-fin}
(\t-c)^2\phi''+(\t-c)[m-(\t-c)\al]\phi'-m\phi=K,\\
{\frac { \left( \t-c \right)  \left( \t-2c \right)}
{\t}}\phi'- \left( {\frac { \left( \t-c \right)
 \left( \t-2c \right)  }{\t}}\al+{\frac {2(\t-c)^2-m\t(\t-2c)}{{\t}^{2}}} \right) \phi \\
 ={\frac {K{\t}^{2}+\lambda\,\t-\lambda\,c}{{\t}^{2}}}.
\end{multline}
Naming the coefficients $A$, $B$, $C$, $D$, $p$, $q$ as before,
we now apply Lemma \ref{DElem} to the system \Ref{odes-fin}.
This time the computation of the two quantities used in the lemma is quite laborious,
though still elementary. A symbolic computational program simplifies the result to
the following.
\be
\begin{aligned}
&A(p^2-p')-Bp+C={\frac { \left( \t-c \right) ^{2} \left( (\t-2c)\t\al'
 +2(\t-c)\,\al +2-2\,m \right) }{\t \left( \t-2c \right) }},\\[-4pt]
&D-A(q'-pq)-Bq=0.
\end{aligned}
\end{equation}
By the lemma and the fact $\phi$ is nowhere zero, solutions are only possible if
the first expression vanishes identically, so that $\al$ solves
\be\lb{alpha}(\t-2c)\t\al'
 +2(\t-c)\,\al +2-2\,m =0.$$
The solutions of this take the form
$$\alpha
%={\frac { \left( 2\,m-2 \right) (\t-2\,c)+{\it C}}{
%\t \left( \t-2\,c \right) }}
=\frac {n-2}\t+\frac {\it C}{
\t \left( \t-2c \right) },\end{equation}
where $C$ is a constant. As \Ref{RH} and the second of Equations \Ref{odes} imply that the
form of $\al$ determines that of $\gamma$, we have
the following outcome. If $c=0$, the metric $g$ is conformal to a gradient Ricci soliton
\cite[Proposition 2.4]{confsol}, while if $c\ne 0$ then $g$ is conformal to a quasi-Einstein
metric \cite{quEin}\footnote{See $(2.3)$ in that paper, where the quasi-Einstein
case is given by $\al=(n-2)/\t+a/(\t(1+k\t))$, where $a$ is a constant and $k=-1/2c$. This corresponds
to formula \Ref{alpha} with $C=-2ac$.}.
(The case $C=0$ is a special case of both these types, where $g$ is
conformally Einstein \cite{dr-ma1}.)

To rule out the case that $\wht{g}$ is a nontrivial gradient Ricci soliton, we
note first that the expression defining $\al$ in \Ref{confK}, when compared to
that in \Ref{alpha}, results in
$$(f\circ\ell)'-k\ell'/\ell=\frac C{\t(\t-2c)}.$$ Additionally,
Equation \Ref{coef} can also be written as
$$(f\circ l)''-k\ell''/\ell+2((f\circ\ell)'-k\ell'/\ell )\t^{-1}=0.$$
Substituting the first of these equations in the last term of the second,
and combining the result with the derivative of the first equation gives,
after eliminating $(f\circ\ell)''-k\ell''/\ell$ and rearranging terms
$$k\ell'^2/\ell^2=\frac{2C}{\t^2(\t-2c)}+\left(\frac C{\t(\t-2c)}\right)'
=-\frac{2cC}{\t^2(\t-2c)^2}.$$
Hence the Ricci soliton case $c=0$ implies that $\ell$ is constant, so that
comparing the two expressions for $\al$ again yields $C=0$, i.e. that $\wht{g}$
is Einstein, which is, of course, a special case of the quasi-Einstein condition.

\end{proof}

%\section{Quasi-soliton metric}
%\setcounter{equation}{0}
%Moreover, the equation for $g$ is, in this case:
%\be\lb{qsol}r+\al\nab d\t+
%((f'-p/h)(h''+2\t^{-1}h')+f''h'^2)dh\otimes dh
%=(\lambda\t^{-2}-\t^{-1}\!\Delta\t+(\al+\t^{-1}\!)\t^{-1}\!Q)g,
%\end{equation}
%where $\al =(n-2/\t+(f'-p/h)h'$, and note that the prime represents
%derivative with respect to $h$ for $f$, and with respect to $\t$ for $h$.

%Now consider the following case: $f=q\log h$ and $h=[(\t-2c)\t^{-1}]^{(p-q)/p}$.
%This choice makes the coefficient of $dh\otimes dh$ vanish, while the coefficient
%$\al - (n-2)/\t$ is $$(q-p)(\log h)'=-\frac{(q-p)^2}p\frac {2c}{\t (\t-2c)},$$
%so that the term $(q-p)^2/p$ makes the numerator $C$ different from $2c$.

\section{Appendix: Killing vector fields of the form $w=\t^2\nab f$}\lb{appen}
\setcounter{equation}{0}
We consider here the classification problem for Killing fields of the form of $w=\t^2\nab f$,
a form that played an important role in \S\ref{Kvec}.
In the following $\t$ and $f$ will denote smooth functions on a given manifold.
\begin{Proposition}
On a compact manifold, a Killing field of the form $w=\t^2\nab f$ must be trivial.
\end{Proposition}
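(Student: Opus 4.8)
The plan is to show that a Killing field of the form $w=\t^2\nab f$ on a compact manifold must vanish identically, by combining the Killing equation with the gradient structure and integrating over the closed manifold.

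First I would record that $w=\t^2\nab f$ is also the gradient of $f$ with respect to the conformally rescaled metric $\wht{g}=\t^{-2}g$, exactly as in the proof of Theorem \ref{vec-fl}: indeed $\wht{\nab}f=\wht{g}^{-1}(df)=\t^2 g^{-1}(df)=\t^2\nab f=w$. Hence $\Ll_w g=\Ll_{\wht\nab f}g$. The Killing hypothesis $\Ll_w g=0$ can then be read on $\wht{g}$ as well: since $\Ll_w g=0$ and $w$ is complete (compactness), the flow of $w$ is by $g$-isometries, and it also preserves $\t$ — because $d_w\t=\t^2 g(\nab f,\nab\t)$ need not vanish a priori, so this is the first thing to think carefully about. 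Actually the cleaner route avoids the conformal metric: from $\Ll_w g=0$ one gets, by \Ref{Lg} with $w=\t^2\nab f$, the symmetric tensor identity $\t^2\nab df+2\t\,d\t\odot df=0$, i.e. $\nab df=-2\t^{-1}\,d\t\odot df$ wherever $\t\ne0$.

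Next I would extract scalar consequences by tracing and by contracting with $\nab f$. Taking the $g$-trace of $\t^2\nab df+2\t\,d\t\odot df=0$ gives $\t^2\Lap f+2\t\,g(\nab\t,\nab f)=0$, that is $\Lap f=-2\t^{-1}g(\nab\t,\nab f)$ on $\{\t\ne0\}$. Contracting instead with $\nab f\otimes\nab f$ gives $\t^2\nab df(\nab f,\nab f)+2\t\,g(\nab\t,\nab f)|\nab f|^2=0$. The key analytic input is that a Killing field has constant $g$-norm's... more precisely, I would use the classical fact that on a compact manifold $\int_M \delta w\, \mathrm{vol}_g=0$ and, for a Killing field, $\delta w=0$ pointwise (Killing fields are divergence-free). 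Thus $0=\delta w=\delta(\t^2\nab f)=-g(\nab(\t^2),\nab f)-\t^2\Lap f=-2\t\,g(\nab\t,\nab f)-\t^2\Lap f$, which recovers the traced identity globally (including where $\t=0$, by continuity) without the conformal detour. Then I would integrate $g(w,\nab f)=\t^2|\nab f|^2$ against a suitable function: since $\mathrm{div}(fw)=f\,\delta'\!\ldots$ — more usefully, $\int_M \mathrm{div}(f w)\,\mathrm{vol}_g=0$ and $\mathrm{div}(fw)=g(\nab f,w)+f\,\mathrm{div}(w)=\t^2|\nab f|^2+0$. (Here I use the analyst's divergence, $\mathrm{div}(w)=-\delta w=0$.) Therefore $\int_M \t^2|\nab f|^2\,\mathrm{vol}_g=0$, forcing $\t^2|\nab f|^2\equiv0$, hence $w=\t^2\nab f\equiv0$.

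The main obstacle is making the integration-by-parts step fully rigorous despite the possible zero set of $\t$: one must confirm that $fw=f\t^2\nab f$ is a globally smooth (indeed $C^1$) vector field on all of $M$ — which it is, being a product of smooth functions and a smooth vector field — so the divergence theorem applies on the closed manifold $M$ with no boundary terms, and that $\mathrm{div}(w)=0$ holds everywhere for a smooth Killing field (a standard pointwise fact, not merely an integral identity). Once those two points are secured, the sign-definiteness of the integrand $\t^2|\nab f|^2\ge0$ closes the argument immediately. I would also remark that no completeness-of-$w$ or flow argument is actually needed; the purely tensorial identity $\mathrm{div}(fw)=\t^2|\nab f|^2$ together with Stokes' theorem suffices.
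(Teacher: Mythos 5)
Your argument is correct, and it takes a genuinely different route from the paper's. After some false starts (the conformal rescaling and the flow of $w$ are not needed, as you yourself note, and the aside about constant norm is a red herring), your proof reduces to three clean facts: a Killing field is divergence-free pointwise; $\operatorname{div}(fw)=g(\nabla f,w)+f\operatorname{div}(w)=\t^2|\nabla f|^2$; and the divergence theorem on a closed manifold forces $\int_M\t^2|\nabla f|^2=0$, hence $\t^2|\nabla f|^2\equiv 0$ and $w\equiv 0$. The paper instead argues via the jet-rigidity of Killing fields: compactness guarantees $\nabla f$, hence $w$, has a zero $p$; at $p$ one checks that $\nabla w$ is simultaneously symmetric (being $\t^2\nabla df$ there, or zero) and skew-symmetric (Killing), hence $w$ and $\nabla w$ both vanish at $p$, and a Killing field vanishing to first order at a point is identically zero. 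The trade-offs are worth noting. Your integral argument is more elementary in that it avoids the rigidity theorem for Killing fields, and it actually proves a slightly stronger statement: any \emph{divergence-free} field of the form $\t^2\nabla f$ on a closed manifold vanishes, the full Killing condition being unnecessary. The paper's argument, by contrast, localizes the role of compactness to the mere existence of a zero of $\nabla f$, so it extends verbatim to any connected (possibly noncompact) manifold on which $w$ is known to vanish somewhere, whereas your Stokes argument genuinely requires closedness. Both proofs are complete; your worry about the zero set of $\t$ is unfounded since $fw$ is globally smooth and the integrand $\t^2|\nabla f|^2$ is continuous and nonnegative everywhere.
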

\begin{proof}
First, on a compact manifold $\nab f$ has zeros, hence so does $w$.
Let $p$ be a zero of $w=\t^2\nab f$. Since $\nab w=2\t d\t\otimes\nab f+\t^2\nab df$, and at a zero
either $\t=0$ or $\nab f=0$, we see that at a zero $\nab w$ either
equals either zero or $\t^2\nab df$. But in the latter case $\nab w$ is symmetric, yet it
is also skew-symmetric as $w$ is a Killing field, hence $\nab w$ must be zero in this case as well.
However, a Killing field $w$ is uniquely determined by the values of $w$ and $\nab w$ at one point.
As those values are zero at $p$, we see that $w$ must be the zero vector field.
\end{proof}

WIthout compactness, we have the following classification for such vector fields.
\begin{Theorem}
A Riemannian metric $g$ with a Killing vector field of the form $w=\t^2\nab f$ is, near generic
points, a warped product with a one dimensional fiber. If $g$ is also K\"ahler, it is, near such points,
a Riemannian product of a K\"ahler metric with a surface metric admitting a nontrivial Killing vector field.
%hermitian metric of revolution.
\end{Theorem}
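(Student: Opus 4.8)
The plan is to analyze the structure forced by the identity $\nab w=2\t\,d\t\otimes\nab f+\t^2\nab df$ together with the skew-symmetry of $\nab w$. Near a generic point we may assume $\t\ne 0$ and $\nab f\ne 0$; then $w=\t^2\nab f$ is a nonvanishing Killing field, and since it is (a constant multiple of) a gradient field as well—indeed $w=\wht{\nab}f$ for the conformal metric $\wht g=g/\t^2$, or more directly $w$ is $g$-proportional to $\nab f$—it is a closed one-form after lowering, in the conformal sense. First I would record that for a nonvanishing Killing field $w$ which is also a (conformal) gradient, the orthogonal distribution $w^\perp$ is integrable, and the metric splits locally as a warped product over the integral leaf of $w^\perp$ with one-dimensional fiber in the direction of $w$; this is the classical de Rham/warped-product type decomposition for a metric admitting a Killing field that is simultaneously of gradient type. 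Concretely, writing $g=h+\phi^2\,dr^2$ in coordinates adapted to $w$ (so $w=\partial_r$ up to scale), the Killing condition forces the cross terms to vanish and $h$ to be $r$-independent, giving the first assertion: near generic points $g$ is a warped product with one-dimensional fiber.

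For the K\"ahler case, the key point is that a nontrivial Killing field on a K\"ahler manifold which is also holomorphic gives, via the reduction theory already invoked in this paper, an even stronger splitting. I would argue that $w=\t^2\nab f$ is holomorphic: since $g$ is K\"ahler and $w$ is both Killing and of gradient type, the condition $[\nab w,J]=0$ can be checked from $\nab w=2\t\,d\t\otimes\nab f+\t^2\nab df$, using that the skew part $\nab w$ and the symmetric Hessian part interact with $J$ in a controlled way; alternatively, one observes that on a K\"ahler manifold a Killing field whose metric dual is closed (here, conformally closed) has a parallel-type behavior that forces holomorphicity on the generic set. Once $w$ is a holomorphic Killing field, its real and imaginary parts $w$ and $Jw$ span a $J$-invariant parallel distribution where the induced metric is flat-warped in a way that, combined with the K\"ahler condition, upgrades the warped product to a genuine Riemannian product: $g=g_1\times g_2$ with $g_2$ a surface metric (the two-dimensional factor tangent to $\mathrm{span}\{w,Jw\}$) carrying the nontrivial Killing field, and $g_1$ the complementary K\"ahler factor.

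The main obstacle I anticipate is the passage from ``warped product with one-dimensional fiber'' to ``genuine Riemannian product'' in the K\"ahler case: a priori the warping function could be non-constant, and one must use the K\"ahler/holomorphic structure to show it is (locally) constant—equivalently that $\nab\log\t$ or the relevant combination is parallel along the fiber. I would handle this by exploiting $J$-invariance: the two-dimensional distribution $\mathrm{span}\{w,Jw\}$ must be $J$-invariant, hence so is its orthogonal complement, and the second fundamental forms of both distributions must be symmetric under $J$; combining this with the warped-product structure (whose fiber contribution to the second fundamental form is $d\log(\mathrm{warp})\otimes(\text{fiber metric})$, generically \emph{not} $J$-invariant unless the warp is constant in the base directions) forces the warping function to be constant, yielding the product. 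The remaining verification—that the surface factor's Killing field is nontrivial—is immediate since $w$ itself descends to it and is nonzero near generic points.
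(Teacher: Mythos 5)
Your first assertion (the Riemannian warped--product splitting) is essentially correct and takes a more elementary route than the paper: you build the warped product directly in flow--adapted coordinates, using integrability of $w^\perp=\ker df$ and $r$--independence of the metric under the flow of $w$, whereas the paper verifies the Hiepko--Ponge--Reckziegel criterion (the totally geodesic foliation $w^\perp$ together with the spherical one--dimensional foliation along $w$). Your version goes through provided you say why the cross terms vanish for all $r$ (the flow of $w$ preserves $g$ and $w$, hence permutes the leaves of $w^\perp$, so coordinate fields starting tangent to a leaf stay tangent to leaves) and why $g(w,w)$ is a function on the base (because $\nabla w$ is skew, $d_w|w|^2=0$).

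The K\"ahler half, however, rests on a claim that is false in general: $w=\t^2\nabla f$ need not be holomorphic. The Killing equation kills the symmetric part of $(\nabla w)_\flat=2\t\,d\t\otimes df+\t^2\nabla df$, leaving $\nabla w=\t\,[\,d\t\wedge df\,]^\sharp$, and this commutes with $J$ only when $d\t\wedge df$ is $J$--invariant, i.e.\ when $\mathrm{span}\{\nabla\t,\nabla f\}$ is a complex line --- which nothing in the hypotheses forces. Concretely, on flat $\C^2$ the rotation $w=-x_2\partial_{x_1}+x_1\partial_{x_2}$ equals $\t^2\nabla f$ with $\t=\sqrt{x_1^2+x_2^2}$ and $f=\arg(x_1+ix_2)$; it is Killing but $[\nabla w,J]\neq0$, since it rotates a totally real $2$--plane. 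Your fallback argument also fails: the dual form $\t^2df$ is not closed, and a Killing field whose dual \emph{is} closed is parallel, which is not the situation here. Even granting holomorphicity, the assertions that $\mathrm{span}\{w,Jw\}$ is a parallel (or even integrable) distribution and that the complementary distributions have $J$--compatible second fundamental forms are precisely what must be proved, not observed; in the flat example above $\mathrm{span}\{w,Jw\}$ is not even integrable. The paper's proof avoids holomorphicity of $w$ altogether: it shows the unit field $w'$, hence also $Jw'$, is parallel along the totally geodesic leaves of $w^\perp$, splits each leaf by the local de Rham theorem as $N\times I$ with $N$ being $J$--invariant and $I$ tangent to $Jw'$, and then uses the warped--product connection formulas applied to $t$ (tangent to the fiber) and $s=Jt$ to conclude $[J\nabla\log l]^N=0$, i.e.\ the warping function varies only along $I$, which yields the product. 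You correctly identified the crux --- constancy of the warp in the base directions --- but the mechanism you propose does not reach it.
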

We note here that a surface with a nontrivial Killing vector field can be presented
%Here by metric of revolution we mean
as a warped product with a one dimensional fiber and base.
\begin{proof} First, the orthogonal complement $\cal{H}$ to $\mathrm{span}(w)$
is generically $[\nabla f ]^\perp$, which is obviously integrable.
%is integrable. This will follow from properties shown later, but we prove it first to show
%it does not depend on the assumption that $w$ is Killing. Let $x$,$y$ be vector fields
%(taking values) in $\cal{H}$, so that $g(x,w)=g(y,w)=0$. We have
%\begin{multline}
%g(\nab_x y,w)=-g(y,\nab_x w)=-g(y,\nab_x(\t^2\nab f))\\
%=-2\t d\t(x)g(y,\nab f)-\t^2g(y,\nab_x\nab f)=-\t^2g(y,\nab_x\nab f)=-\t^2\nab df (y,x)
%\end{multline}
%and similarly $g(\nab_y x,w)=-\t^2\nab df (x,y)$, so that
%symmetry of the Hessian of $f$ implies $g([x,y],w)=g(\nab_xy,w)-g(\nab_yx,w)=0$, i.e.
%the Lie bracket of $x$ and $y$ is in $\cal{H}$.
Next, $\cal{H}$ is totally geodesic. This follows immediately since $g(\dot x, w)$
is constant for any geodesic $x(t)$ and Killing field $w$. Alternatively, it
can also be shown directly. With $x$, $y$ denoting vector fields (taking values) in $\cal{H}$,
we compute that $g(\nab_x y,w)=
-g(y,\nab_x w)=-g((\nab w)^*(y),x)=g(\nab_y w,x)=-g(w,\nab_yx))$, where in the penultimate
step we used the Killing property $(\nab w)^*=-\nab w$.
One concludes that the sum $\nab_xy+\nab_yx$ is in $\cal{H}$, and since the same
holds for $\nab_xy-\nab_yx$ by integrability, we see that $\nab_xy$ is in $\cal{H}$.
%showing that the leaves of this distribution are totally geodesic.

By a result originating in works of Hiepko \cite{hiep}  along with Ponge and Reckziegel \cite{pon-rec}
(see especially Theorem $3.1$ in the survey of Zeghib \cite{zeg}) a metric is a warped product if
and only if it admits two orthogonal foliations, one totally geodesic and the other
spherical. In our case we have just shown the foliation orthogonal to $w$ is totally geodesic.
The fibers tangent to $\mathrm{span}(w)$, on the other hand, are certainly totally umbilic, as they
are one dimensional. This is part of the definition of spherical. The other part is that the mean
curvature vector is parallel with respect to the normal connection. We now check
this.

Let $w'=w/|w|$ be a unit vector parallel to $w$, defined away from its zeros. The mean curvature
vector to the fibers is then, by definition, $n=\nabla_{w'} w'$, which takes values in $\cal{H}$.
The requirement that $\mathrm{span}(w)$ be spherical amounts to showing that for any
%avector field $u$ parallel to $w$
$x\in\cal{H}$, we have $g(\nabla_w n,x)=0$.
%It is enough to take $u=w$.
The flow of $w$ certainly preserves itself (as $[w,w]=0$) and also $g$ and $\nabla$ (as $w$ is Killing). Therefore the
flow also preserves $w' = w/\sqrt{g(w,w)}$ and thus also $n=\nabla_{w'} w'$. Hence $[w,n]=0$, so that
\begin{multline}
2g(\nabla_w n,x) = 2g(\nabla_n w,x) = g(\nabla_n w,x) - g(n,\nabla_x w)\\\nonumber
= - g(w, \nabla_n x) + g(w,\nabla_x n) = g(w, [x,n]) = 0,
\end{multline}
as $\cal{H}$ is integrable. This concludes the first part of the proof.

What remains is to classify K\"ahler warped products with a one dimensional fiber. Suppose the
manifold is given by $M=B\times F$, with $F$ the
fiber (an interval). Since the base foliation corresponding to $B$ is totally geodesic,
parallel transport along one of its leaves with respect to $g$ is the same as parallel
transport with respect to the induced metric on this leaf, and therefore it
preserves the tangent spaces to these leaves. It is well-known that it also preserves the normal spaces
to the leaves; for completeness, we show explicitly that the unit vector field
$w'$ perpendicular to the leaves is preserved. If $x$ and $y$ are, as usual, vector
fields tangent to the leaves, then $g(w',y)=0$, so $0=d_xg(w',y)=g(\nab_xw',y)+g(w',\nab_xy)=
g(\nab_x w',y)$ because the leaves are totally geodesic, and similarly
$0=d_xg(w',w')=2g(\nab_xw',w')$. So $\nab_x w'$, being orthogonal to a basis, is zero, i.e
$w'$ is parallel in directions tangent to the leaves.

As $g$ is K\"ahler, the complex structure $J$ commutes with any $\nab_x$, so that $Jw'$ is also parallel
in leaf directions. But $Jw'$ is itself tangent to leaves of the base foliation. Therefore, by the
local de Rham Theorem, the induced metric on any leaf splits locally into a Riemannian product
so that $B=N \times I$, where the one dimensional factor $I$ is
tangent to $Jw'$, and $N$ is $J$-invariant, hence has holomorphic (and totally geodesic)
leaves in $M$.

Armed with this information it remains to show that, near generic points,
$$\text{$g$ is a product of a K\"ahler metric
on $N$ and a local metric of revolution on $I\times F$.}$$
For this we turn to a computation that is based on the formulas (cf. \cite{onl})
for the connection of the warped product metric $g=g_B+l^2g_F$,
where the function $l$ is a (lift of) a function on $B$.
Let $t$ be a nontrivial vector field tangent to $F$ which is projectable onto $F$.
Let $s=Jt$, a vector field tangent to $I$.  Then standard
formulas for warped products give
\be\lb{tt}\nab_t t= (\nab_t t)^B+(\nab_t t)^F  = -|t|^2\nab(\log(l))+ct,\end{equation}
with $c$ some function, and the last term takes that form because the fiber is one dimensional.
Next, as $s$ is tangent to $I$, there
is some function $h$ on $M$ such that the vector field
$hs$ is projectable onto I. Therefore, again by warped
product formulas,
\be\lb{tfs}\nab_t (hs) =hs(\log(l))t.\end{equation}
But $\nab_t (hs) = (d_th)s+h\nab_t s= (d_th)s+hJ\nab_t t
= (d_th)s-h|t|^2J\nab(\log(l))+hcs$, by \Ref{tt}.
Equating this expression with the right hand side of
\Ref{tfs} and taking  components tangent to
$N$ gives $h|t|^2[J\nab(\log(l))]^N=0$, so that, away
from the zeros of $h$ and $t$, $[J\nab(\log(l))]^N=0$. Now each tangent
space $T_pN$ is $J$-invariant, so $J$ commutes with the projection to $N$.
Hence $\nab(\log(l))^N=0$ and so $\nab(\log(l))$ is parallel to $s$, which
means that the warping function $l$ is constant on the leaves of $N$, and only changes along the
fibers associated with $I$. Thus $g$ is a Riemannian product of the type claimed above.
\end{proof}

% ************************************************************************
%       Bibliography:
% ************************************************************************

\end{document}